\def\LaTeX{\leavevmode L\raise.42ex
   \hbox{\kern-.3em\size{\sf@size}{0pt}\selectfont A}\kern-.15em\TeX}
\newcommand{\BibTeX}{{\rm B\kern-.05em{\sc
i\kern-.025emb}\kern-.08em\TeX}}
\newtheorem{thm}{Theorem}[section]
\newtheorem{lem}[thm]{Lemma}
\theoremstyle{definition}
\newtheorem{dfn}{Definition}
\numberwithin{equation}{section}
\begin{document}

\title[Deconvolution of band limited
functions]{Deconvolution of band limited
functions on non-compact symmetric spaces}

\author{Isaac Pesenson}
\address{Department of Mathematics, Temple University,
Philadelphia, PA 19122} \email{pesenson@math.temple.edu}

\keywords{Helgason-Fourier transform, Laplace-Beltrami operator,
band limited functions on symmetric spaces, spherical average,
frames, average splines on symmetric spaces}
 \subjclass[2000]{ 43A85;94A20;
Secondary 53A35;44A35}

\begin{abstract}It is shown that a band limited
function on a non-compact symmetric space can be reconstructed in
a stable way from some countable sets of values of its convolution
with certain distributions of compact support. A reconstruction
method in terms of frames is given  which is a generalization of
the classical result of Duffin-Schaeffer about exponential frames
on intervals. The second reconstruction method is given in terms
of polyharmonic average splines.
\end{abstract}

\maketitle

\section{Introduction}

 One of the most
interesting properties of the so called band limited functions, i.
e. functions whose Fourier transform has compact support,  is that
they are uniquely determined by their values on some countable
sets of points and can be reconstructed from such values in a
stable way. The sampling problem for band limited functions had
attracted attention of many mathematicians [3], [4], [5], [14].

The mathematical theory of reconstruction of band limited
functions from discrete sets of samples was introduced to the
world of signal analysis and information theory by  Shannon [21].
After Shannon the concept of band limitedness and the Sampling
Theorem became the theoretical foundation of the signal analysis.
 In the classical signal analysis it is assumed that
a signal ($\equiv$ a function) propagates in a Euclidean space.
The most common method to receive, to store and to submit signals
is through  sampling using not the entire signal but rather a
discrete set of its amplitudes measured on a sufficiently dense
set of points.

In the present paper we consider non-compact symmetric spaces
which include  hyperbolic spaces. A sampling Theory on Riemannian
manifolds and in Hilbert spaces was started by the author in [16]-
[20].
 It was shown in [16] that for properly defined band limited functions
 on manifolds the following sampling property holds true: they can
 be recovered from countable sets of their amplitudes measured on
 a sufficiently dense set of points.

Recently  A. Kempf [12], [13] explored our results [16], [18] to
developed a theory which approaches quantization of space-time and
information through the sampling ideas on manifolds.

By using Harmonic Analysis on symmetric spaces we give further
development of the  theory of band limited functions on manifolds.
It is shown that under certain assumptions a band limited function
is uniquely determined and can be reconstructed in a stable way
from some countable sets of values of its convolution with a
distribution.

 The problem of
deconvolution is very natural in the context of Signal Analysis
since convolution is used for denoising a signal with subsequent
reconstruction of the band limited component. On the other hand,
since it is impossible to perform an exact measurement of a signal
at a point, convolution with a distribution can represent a
measuring device.

 As an illustration of the main result it is shown in  Section
 7 that every band limited  function is uniquely determined and can be
reconstructed in a stable way from a set of averages of its
"derivatives" $\Delta^{n}f, n\in \mathbb{N}\bigcup \{0\},$ over a
set of small spheres of a fixed radius whose centers form a
sufficiently dense set, here $\Delta$ is the Laplace-Beltrami
operator of $X$. As a particular case we obtain that every band
limited function on a non-compact symmetric space of rank one is
uniquely determined and can be reconstructed in a stable way from
a set of samples of its "derivatives" $\Delta^{n}f(x_{i}), n\in
\mathbb{N}\bigcup \{0\},$ for a countable and sufficiently dense
set of points $\{x_{i}\}$. Another particular case is, that a band
limited function $f$ on  $X$ is uniquely defined and can be
reconstructed in a stable way from a set of its average values
over a set of small spheres whose centers form a sufficiently
dense set.

The main result of  Section 7 is the Theorem 7.3 which shows that
for all these results the density of centers of the spheres
depends just of the band width $\omega$ and independent of the
radius of the spheres $\tau$ and of the  smoothness index $n$. The
radius of the spheres $\tau$ depends on $\omega$ and smoothness
$n$ but it is independent of the distance $r$ between centers and
can be relatively large compare to $r$. In other words, a stable
reconstruction can still take place even if spheres intersect each
other.

Reconstruction of band limited functions in $L_{2}(R^{d})$ from
their average values over sets of full measure were initiated  in
the paper [6].  Our approach  to this problem is very different
from the approach of [6] and most of our results are new even in
the one-dimensional case. It seems that reconstruction from
averages over sets of measure zero (other then points) was never
considered for band limited functions on $\mathbb{R}^{d}$.

 Let $G$ be a semi-simple
Lie group with a finite center and $K$ its maximal compact
subgroup. We consider a non-compact symmetric space $X=G/K$. We
say that a function $f$ from $L_{2}(X)$ is $\omega$-band limited
if its Helgason-Fourier transform $\hat{f}(\lambda,b)$ is zero for
$<\lambda,\lambda>^{1/2}=\|\lambda\|>\omega$, where $<.,.>$ is the
Killing form. We discuss some properties of such functions in the
Section 3.
 It is shown in particular, that for any
$\omega>0$ and any ball $U\subset X$ restrictions of all
$\omega$-band limited functions to the ball $U$ are dense in
$L_{2}(U, dx)$.

In  Section 4 we prove  uniqueness and stability theorems. In
Section 5 a reconstruction algorithm in terms of frames is
presented. This result is a  generalization of the
 results of Duffin-Schaeffer [5]
about exponential frames on intervals. Let us recall, that the
main result of [5] states, that for so called uniformly dense
sequences of scalars $\{x_{j}\}$ the exponentials
$\{e^{ix_{j}\xi}\}$ form a frame in $L_{2}(I)$ on a certain
interval $I$ of the real line. In other words, if the support of
the Fourier transform of a function $f\in L_{2}(R)$ belongs to
$I$, then $f$ is uniquely determined and can be reconstructed in a
stable way from its values on a countable set of points
$\{x_{j}\}$.

In  Section 6 we consider another way of reconstruction by using
spline-like functions on $X$. It is a far going generalization of
some results of Schoenberg [22].

\section{Harmonic Analysis on symmetric spaces}

A Riemannian symmetric space $X$ is defined as $G/K$, where $G$ is
a connected non-compact semi-simple group Lie with Lie algebra
with finite center and $K$ its maximal compact subgroup. Their Lie
algebras will be denoted respectively as $\textbf{g}$ and
$\textbf{k}$. The group $G$ acts on $X$ by left translations and
it has the "origin" $o=eK$, where $e$ is the identity in $G$.
Every such $G$ admits  Iwasawa decomposition $G=NAK$, where
nilpotent Lie group $N$ and abelian group $A$ have Lie algebras
$\textbf{n}$ and $\textbf{a}$ respectively. The dimension of
$\textbf{a}$ is known as the rank of $X$.
 Letter $M$ is usually used to denote the centralizer of $A$ in
$K$ and letter $B$ is commonly used for the factor $B=K/M$.

Let $\textbf{a}^{*}$ be the real dual of $\textbf{a}$ and $W$ be
the Weyl's group. The $\Sigma$ will be the set of all bounded
roots, and $\Sigma^{+}$ will be the set of all positive bounded
roots. The notation $\textbf{a}^{+}$ has the following meaning
$$
\textbf{a}^{+}=\{h\in \textbf{a}|\alpha(h)>0, \alpha\in
\Sigma^{+}\}
$$
 and is known as positive Weyl's chamber. Let $\rho\in
 \textbf{a}^{*}$ is defined in a way that $2\rho$ is the sum of
 all  positive bounded roots. The Killing form $<,>$ on $\textbf{g}$
 defines a
 metric on $\textbf{a}$. By duality it defines a scalar product on
 $\textbf{a}^{*}$. The $\textbf{a}^{*}_{+}$ is the set of
 $\lambda\in \textbf{a}^{*}$, whose dual belongs to
 $\textbf{a}^{+}$.
According to Iwasawa decomposition for every $g\in G$ there exists
a unique $A(g)\in \textbf{a}$ such that
$$g=n \exp A(g) k, k\in K, n\in N,
$$
 where $\exp :\textbf{a}\rightarrow A$ is the exponential map of
 the
 Lie algebra $\textbf{a}$ to Lie group $A$. On the direct product
 $X\times B$ we introduce function with values in $\textbf{a}$
 using the formula
 \begin{equation}
 A(x,b)=A(u^{-1}g)
 \end{equation}
 where $x=gK, g\in G, b=uM, u\in K$.

For every $f\in C_{0}^{\infty}(X)$ the Helgason-Fourier transform
is defined by the formula
$$
\hat{f}(\lambda,b)=\int_{X}f(x)e^{(-i\lambda+\rho)A(x,b))}dx,
$$
where $ \lambda\in \textbf{a}^{*}, b\in B=K/M, $ and  $dx$ is a
$G$-invariant measure on $X$. This integral can also be expressed
as an integral over group $G$. Namely, if $b=uM,u\in K$, then
\begin{equation}
\hat{f}(\lambda,b)=\int_{G}f(x)e^{(-i\lambda+\rho)A(u^{-1}g))}dg.
\end{equation}
The following inversion formula holds true
$$
f(x)=w^{-1}\int_{\textbf{a}^{*}\times
B}\hat{f}(\lambda,b)e^{(i\lambda+\rho)(A(x,b))}|c(\lambda)|^{-2}d\lambda
db,
$$
where $w$ is the order of the Weyl's group and $c(\lambda)$ is the
Harish-Chandra's function, $d\lambda$ is the Euclidean measure on
$\textbf{a}^{*}$ and $db$ is the normalized $K$-invariant measure
on $B$. This transform can be extended to an isomorphism between
spaces $L_{2}(X,dx)$ and $L_{2}(\textbf{a}^{*}_{+}\times B,
|c(\lambda)|^{-2}d\lambda db)$ and the Plancherel formula holds
true
$$
\|f\|=\left( \int_{\textbf{a}^{*}_{+}\times B}|\hat{f}
(\lambda,b)|^{2}|c(\lambda)|^{-2}d\lambda db\right)^{1/2}.
$$

An analog of the Paley-Wiener Theorem is known which says in
particular that a Helgason-Fourier transform of a compactly
supported distribution is a function which is analytic in
$\lambda$.

To introduce convolutions on $X$ we will need the notion of the
spherical Fourier transform on the group $G$.

For a $\lambda\in \textbf{a}^{*}$ a zonal spherical function
$\varphi_{\lambda}$ on the group $G$ is introduced by the
Harish-Chandra's formula
\begin{equation}
\varphi_{\lambda}(g)=\int_{K}e^{(i\lambda+\rho)(A(kg))}dk.
\end{equation}
If $f$ is a smooth bi-invariant function on $G$ with compact
support its spherical Fourier transform is  a function on
$\textbf{a}^{*}$ which is defined by  the formula
$$
\widehat{f}(\lambda)=\int_{G}f(g)\varphi_{-\lambda}(g)dg.
$$
The inversion formula is
$$
f(g)=w^{-1}\int_{\textbf{a}^{*}}\widehat{f}(\lambda)\varphi_{\lambda}(g)dg.
$$
The corresponding Plancherel formula has the form
$$
\left(\int_{G}|f(g)|^{2}dg\right)^{1/2}=
\left(\int_{\textbf{a}^{*}_{+}}|\widehat{f}(\lambda)|^{2}d\lambda\right)^{1/2}.
$$

If $f$ is a function on the symmetric space $X$ and $\psi$ is a
K-bi-invariant function on $G$ their convolution is a function on
$X$ which is defined by the formula
$$
f\ast \psi(g\cdot o)=\int_{G}f(gh^{-1}\cdot o)\psi(h)dh, g\in G.
$$
By using duality arguments the last definition can be extended
from functions to distributions. It is known, that
\begin{equation}
\widehat{f\ast\psi}(\lambda,b)=\widehat{f}(\lambda,b)\widehat{\psi}(\lambda).
\end{equation}

Denote by $T_{x}(M)$ the tangent space of $M$ at a point $x\in M$
and let $ exp_ {x} $ :  $T_{x}(M)\rightarrow M$ be the exponential
geodesic map i.  e. $exp_{x}(u)=\gamma (1), u\in T_{x}(M)$ where
$\gamma (t)$ is the geodesic starting at $x$ with the initial
vector $u$ :  $\gamma (0)=x , \frac{d\gamma (0)}{dt}=u.$ In what
follows we assume that local coordinates  are defined by $exp$.

By using a uniformly bounded partition of unity
$\{\varphi_{\nu}\}$ subordinate to a cover of  $X$ of finite
multiplicity
$$
X=\bigcup_{\nu} B(x_{\nu}, r),
$$
where $B(x_{\nu}, r)$ is a metric ball at  $x_{\nu}\in X$ of
radius $r$ we introduce Sobolev space $H^{\sigma}(X), \sigma>0,$
as the completion of $C_{0}^{\infty}(X)$ with respect to the norm
\begin{equation}
\|f\|_{H^{\sigma}(X)}=\left(\sum_{\nu}\|\varphi_{\nu}f\|^{2}
_{H^{\sigma}(B(y_{\nu}, r))}\right) ^{1/2}.
\end{equation}

 The usual embedding Theorems for the spaces $H^{\sigma}(X)$
hold true.

The Killing form on $G$ induces an inner product on tangent spaces
of $X$. Using this inner product it is possible to construct
$G$-invariant Riemannian structure on $X$. The Laplace-Beltrami
operator of this Riemannian structure is denoted as $\Delta$.

If the space $X$ has rank one, then in the polar geodesic
coordinate system $(r,\theta_{1},...,\theta_{d-1})$ on $X$ at
every point $x\in X$ it has the form [9]
$$
\Delta=\partial^{2}_{r}+\frac{1}{S(r)}\frac{dS(r)}{dr}\partial
_{r}+\Delta_{S},
$$
where $\Delta_{S} $ is the Laplace-Beltrami operator on the sphere
$S(x,r)$ of the induced Riemannian structure on $S(x,r)$ and
$S(r)$ is the surface area of a sphere of radius $r$ which depends
just on $r$ and is given by the formula
\begin{equation}
S(r)=\Omega_{d}2^{-q}c^{-p-q}sh^{p}(cr)sh^{q}(2cr),
\end{equation}
where $d=dim X=p+q+1, c=(2p+8q)^{-1/2}$, $p$ and $q$ depend on $X$
and
$$
\Omega_{d}=\frac{2\pi^{d/2}}{\Gamma(d/2)}
$$
 is the surface area of the unit sphere in $d$-dimensional
 Euclidean space.

In particular, if a function $f$ is zonal, i.e. depends just on
distance $r$ from the origin $o$,  we have
\begin{equation}
\Delta f(r)=
\frac{1}{S(r)}\frac{d}{dr}\left(S(r)\frac{df(r)}{dr}\right).
\end{equation}

Since for a smooth function $f$ with compact support the function
$\Delta f$ can be expressed as a convolution of $f$ with the
distribution $\Delta \delta_{e}$, where the Dirac measure
$\delta_{e}$ is supported at the identity $e$ of $G$, the formula
(2.4) implies

\begin{equation}
\widehat{\Delta
f}(\lambda,b)=-(\|\lambda\|^{2}+\|\rho\|^{2})\hat{f}(\lambda,b),
f\in C_{0}^{\infty}(X),
\end{equation}
where $\|\lambda\|^{2}=<\lambda,\lambda>,\|\rho\|^{2}=<\rho,\rho>,
<,>$ is the Killing form.

 It is known that for a general $X$ the operator $(-\Delta)$ is a self-adjoint positive
definite operator in the corresponding space $L_{2}(X,dx),$ where
$dx$ is the $G$-invariant measure. The regularity Theorem for the
Laplace-Beltrami operator $\Delta$ states that domains of the
powers
 $(-\Delta)^{\sigma/2}$ coincide with the Sobolev spaces
$H^{\sigma}(X)$ and  the norm (2.5) is equivalent to the graph
norm $\|f\|+\|(-\Delta)^{\sigma/2}f\|$.

\bigskip

  We consider
a ball $B(o,r/4)$ in the invariant metric on $X$. Now we choose
such elements $g_{\nu}\in G$ that the family of balls
$B(x_{\nu},r/4), x_{\nu}=g_{\nu}\cdot o,$ has the following
maximal property: there is no ball in $X$ of radius $r/4$ which
would have empty intersection with every ball from this family.
Then the balls of double radius $B(x_{\nu},r/2)$ would form a
cover of $X$. Of course, the balls $B(x_{\nu},r)$ will also form a
cover of $X$. Let us estimate the multiplicity of this cover.

Note, that the Riemannian volume $B(\rho)$ of a ball of radius
$\rho$ in $X$ is independent of its  center and   is given by the
formula
$$
B(\rho)=\int_{0}^{\rho}S(t)dt,
$$
where the surface area $S(t)$ of any sphere of radius $t$ is given
by the formula (2.4).

 Every ball from the family $\{B(x_{\nu}, r)\}$, that has
non-empty intersection with a particular ball $B(x_{j}, r)$ is
contained in the ball $B(x_{j}, 3r)$. Since any two balls from the
family $\{B(x_{\nu}, r/4)\}$ are disjoint, it gives the following
estimate for the index of multiplicity  of the cover $\{B(x_{\nu},
r)\}$:
\begin{equation}
\frac{B(3r)}{B(r/4)}=\frac{\int_{0}^{3r}S(t) dt}{\int_{0}^{r/4}
S(t) dt}.
\end{equation}

It is clear that for all sufficiently small $r>0$ this fraction is
bounded. In what follows we we will use the notation
$$
N=\sup_{0<r<1}\frac{B(3r)}{B(r/4)}.
$$

 So, we proved the following Lemma.
\begin{lem}
For any sufficiently small  $r>0$ there
 exists a set of points $\{x_{\mu}\}$ from $X$ such that

1) balls $B(x_{\mu}, r/4)$ are disjoint,

2) balls $B(x_{\mu}, r/2)$ form a cover of $X$,

3) multiplicity of the cover by balls $B(x_{\mu}, r)$ is not
greater $N.$

\end{lem}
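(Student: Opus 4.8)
The plan is to construct the points $\{x_\mu\}$ by a greedy maximal-packing argument and then verify each of the three properties in turn.

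First, I would invoke Zorn's lemma (or a transfinite/greedy selection) to produce a maximal family of points $\{x_\mu\}$ subject to the constraint that the balls $B(x_\mu, r/4)$ are pairwise disjoint. Maximality here is exactly the property stated in the text: no further ball of radius $r/4$ can be added disjointly, i.e.\ there is no point $y \in X$ with $B(y, r/4)$ disjoint from every $B(x_\mu, r/4)$. This immediately gives property 1) by construction. The existence of such a maximal family requires only that nested disjoint families have disjoint unions as upper bounds, so Zorn applies without difficulty.

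Next I would deduce property 2) from maximality. Given any $y \in X$, maximality forces $B(y, r/4)$ to meet some $B(x_\mu, r/4)$, so there is a point $z$ with $d(z,y) < r/4$ and $d(z, x_\mu) < r/4$. The triangle inequality then yields $d(y, x_\mu) < r/2$, i.e.\ $y \in B(x_\mu, r/2)$. Since $y$ was arbitrary, the balls $B(x_\mu, r/2)$ cover $X$, and a fortiori so do the balls $B(x_\mu, r)$.

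The main work is property 3), the uniform bound on multiplicity, and this is where the earlier volume estimate does the essential job. Fix a ball $B(x_j, r)$ and consider all $x_\mu$ with $B(x_\mu, r) \cap B(x_j, r) \neq \emptyset$. As noted in the text, each such ball $B(x_\mu, r)$ is contained in $B(x_j, 3r)$, hence so is the smaller disjoint ball $B(x_\mu, r/4)$. Since these $r/4$-balls are pairwise disjoint and all sit inside $B(x_j, 3r)$, summing their volumes gives
$$
(\text{number of such } \mu)\cdot B(r/4) \;=\; \sum_\mu \mathrm{vol}\,B(x_\mu, r/4) \;\le\; B(3r),
$$
where I use that the volume $B(\rho)$ of a ball of radius $\rho$ is independent of its center, as established via the formula $B(\rho) = \int_0^\rho S(t)\,dt$. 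Dividing by $B(r/4)$ bounds the number of overlapping balls by $B(3r)/B(r/4)$, which by the discussion preceding the lemma is bounded uniformly for $0 < r < 1$ by the constant $N = \sup_{0<r<1} B(3r)/B(r/4)$. This gives property 3) with the stated bound $N$.

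The only genuinely delicate point is the center-independence of the volume $B(\rho)$, which is what lets me compare volumes of balls centered at different $x_\mu$ against a single denominator $B(r/4)$; this is guaranteed by the $G$-invariance of the metric and measure on $X = G/K$, since $G$ acts transitively by isometries. Everything else is a routine packing-covering argument, so I expect no serious obstacle once the volume-comparison input is in hand.
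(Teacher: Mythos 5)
Your proposal is correct and follows essentially the same route as the paper: a maximal disjoint family of $r/4$-balls, the triangle inequality to get the $r/2$-covering, and the volume-packing comparison $B(3r)/B(r/4)\le N$ (using center-independence of ball volume from $G$-invariance) for the multiplicity bound. The only additions are formalizations the paper leaves implicit (Zorn's lemma for the maximal family, the explicit triangle-inequality step), which do not change the argument.
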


We will use notation $Z(\{x_{\mu}\}, r, N)$ for any set of points
$\{x_{\mu}\}\in X$ which satisfies the properties 1)- 3) from the
last Lemma and we will call such set a metric $(r,N)$-lattice of
$X$.

The following results can be found in [16] for any homogeneous
manifold $X$.
\begin{sloppypar}
\begin{thm}
For any $k>d/2$  there exist  constants $C=C(X, k, N)>0,
r_{0}(X,k,N),$
 such that for any $0<r<r_{0}$ and any $(r,N)$-lattice
 $Z=Z(\{x_{\mu}\},r,N)$ the following inequality holds true

\begin{equation}
\|f\|\leq C\left\{r^{d/2}\left(\sum_{x_{j}\in Z}
|f(x_{j})|^{2}\right)^{1/2}+r^{k}\|\Delta^{k/2}f\|\right\}, k>d/2.
\end{equation}
and there exists a constant $C_{1}=C_{1}(X, k, N)$ such that

$$\left(\sum _{x_{j}\in Z}|f(x_{j})|^{2}\right)^{1/2}\leq
 C_{1}\|f\|_{H^{k}(X)}, f\in H^{k}(X).
 $$
\end{thm}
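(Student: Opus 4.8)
The plan is to reduce the global estimate to a local one on the individual balls of the cover furnished by Lemma~2.1, and then to sum the local estimates using the finite multiplicity $N$. Since the balls $B(x_\mu,r/2)$ cover $X$, for every $f$ one has $\|f\|^2\le\sum_\mu\|f\|_{L_2(B(x_\mu,r/2))}^2$, the overlaps only enlarging the right-hand side. Note also that $k>d/2$ guarantees, via the embedding theorems for $H^{\sigma}(X)$ mentioned above, that $f$ is continuous, so the samples $f(x_j)$ are well defined. It therefore suffices to prove a local inequality of the form
\[
\|f\|_{L_2(B(x_\mu,r/2))}^2\le C\Big(r^d|f(x_\mu)|^2+r^{2k}\|\Delta^{k/2}f\|_{L_2(B(x_\mu,r))}^2\Big)
\]
on each ball, with $C$ independent of $\mu$ and of $r$, and then to add these up.

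For the local inequality I would work in geodesic normal coordinates centered at $x_\mu$, given by $\exp_{x_\mu}$. Because $X$ is homogeneous, all balls $B(x_\mu,r)$ are isometric, so a single model ball suffices and the constants are automatically uniform in $\mu$; moreover, for $r<r_0$ small the Riemannian metric in these coordinates is uniformly comparable to the Euclidean one and the volume element to Lebesgue measure. Rescaling the coordinate by $r$ reduces matters to the unit ball $B(0,1)\subset\mathbb{R}^d$, where, since $k>d/2$, the Sobolev embedding into continuous functions is available. A Bramble--Hilbert/Peetre compactness argument on the fixed domain $B(0,1)$ then yields
\[
\|g\|_{L_2(B(0,1))}^2\le C\Big(|g(0)|^2+\sum_{1\le|\alpha|\le k}\|\partial^\alpha g\|_{L_2(B(0,1))}^2\Big),
\]
the point being that a function annihilated by the entire right-hand side is a constant vanishing at the origin, hence zero. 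Undoing the scaling $g(y)=f(ry)$ turns the $|\alpha|$-th seminorm into $r^{2|\alpha|}\|\partial^\alpha f\|_{L_2(B(x_\mu,r))}^2$ and produces the factor $r^d$ in front of $|f(x_\mu)|^2$.

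The main obstacle is the appearance of the intermediate seminorms $\|\partial^\alpha f\|$ with $1\le|\alpha|<k$: under the scaling these carry the factor $r^{2|\alpha|}$, which exceeds the target factor $r^{2k}$ when $r<1$, and they are neither point samples nor the top-order term. I would remove them by a Gagliardo--Nirenberg interpolation, bounding each intermediate term by $\varepsilon\|f\|_{L_2(B(x_\mu,r))}^2+C_\varepsilon r^{2k}\sum_{|\alpha|=k}\|\partial^\alpha f\|_{L_2(B(x_\mu,r))}^2$, with $\varepsilon$ as small as desired once $r<r_0$. Summing over $\mu$ and using that the cover $\{B(x_\mu,r)\}$ has multiplicity at most $N$, the total of the $\varepsilon\|f\|^2$ contributions is at most $N\varepsilon\|f\|^2$, which for $\varepsilon$ small is absorbed into the left-hand side, while the top-order contributions sum to at most $N\,r^{2k}\sum_{|\alpha|=k}\|\partial^\alpha f\|^2$. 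Replacing the top-order derivatives by $\|\Delta^{k/2}f\|$ is legitimate by the regularity theorem quoted above, which identifies the graph norm of $(-\Delta)^{k/2}$ with the $H^k$-norm. Taking square roots gives the first inequality with $C=C(X,k,N)$; securing uniformity of all constants in $r$ (through the interpolation and absorption step) is the delicate part.

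The second inequality is the direct direction and is routine. The scaled Sobolev embedding on $B(x_j,r)$ gives the pointwise bound $|f(x_j)|^2\le C\,r^{-d}\|f\|_{H^k(B(x_j,r))}^2$ for $k>d/2$; summing over $j$ and invoking once more the multiplicity bound $N$ controls the $\ell^2$-sum $\sum_{x_j\in Z}|f(x_j)|^2$ by a constant multiple of $\|f\|_{H^k(X)}^2$, which is the assertion.
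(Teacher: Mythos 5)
You should know at the outset that the paper itself contains no proof of this theorem: it is quoted verbatim from reference [16] (``The following results can be found in [16] for any homogeneous manifold $X$''), so your proposal has to be judged against the argument used there and in the companion papers [17], [7]. Your overall architecture (localize to the balls of Lemma 2.1, sum using the multiplicity $N$, scaled Sobolev embedding for the second inequality) is the right general shape, and your compactness lemma on the unit ball is correct. The gap is at the decisive step: the local inequality you reduce everything to,
\[
\|f\|_{L_2(B(x_\mu,r/2))}^2\le C\left(r^d|f(x_\mu)|^2+r^{2k}\|\Delta^{k/2}f\|_{L_2(B(x_\mu,r))}^2\right),
\]
is \emph{false} whenever $k\ge 2$. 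In your Euclidean model take $g$ a nonconstant polynomial of degree $<k$ vanishing at the origin (on the manifold, for $k$ even, a nonconstant $\Delta$-harmonic function vanishing at $x_\mu$): the right-hand side vanishes while the left-hand side does not. For the same reason the Gagliardo--Nirenberg step you use to reach it cannot hold: on a bounded domain any inequality
\[
r^{2|\alpha|}\|\partial^\alpha f\|_{L_2(B_r)}^2\le \varepsilon\|f\|_{L_2(B_r)}^2+C_\varepsilon r^{2k}\sum_{|\beta|=k}\|\partial^\beta f\|_{L_2(B_r)}^2
\]
forces $\varepsilon$ to stay above a fixed positive constant, because polynomials of degree $|\alpha|$ lie in the kernel of the top-order seminorm but not of the intermediate one; moreover, taking $f(x)=x_1^{|\alpha|}$ shows both sides scale identically in $r$, so shrinking $r$ does not make $\varepsilon$ small. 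The phrase ``with $\varepsilon$ as small as desired once $r<r_0$'' is exactly what is unavailable, and no local argument can supply it, since the target local inequality itself has the polynomial/harmonic counterexample.

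The repair --- and this is how the quoted source actually argues --- is to keep \emph{all} intermediate seminorms in the local estimate and postpone the absorption until \emph{after} summation, because the interpolation you need is false on balls but true globally. Your unit-ball lemma gives
\[
\|f\|_{L_2(B(x_\mu,r/2))}\le C\left(r^{d/2}|f(x_\mu)|+\sum_{j=1}^{k}r^{j}\|\nabla^j f\|_{L_2(B(x_\mu,r))}\right),
\]
and summing over $\mu$ with multiplicity $N$ yields
\[
\|f\|\le C\left(r^{d/2}\left(\sum_{x_j\in Z}|f(x_j)|^2\right)^{1/2}+\sum_{j=1}^{k}r^{j}\|\nabla^j f\|_{L_2(X)}\right).
\]
Now, on all of $X$, the operator $-\Delta$ is nonnegative and self-adjoint, so H\"older's inequality applied to the spectral measure (equivalently, to the Plancherel integral, using (2.8)) gives the \emph{multiplicative} interpolation free of additive terms, $\|\Delta^{j/2}f\|\le\|f\|^{1-j/k}\|\Delta^{k/2}f\|^{j/k}$; combined with the regularity theorem, $\|\nabla^j f\|_{L_2(X)}\le C\left(\|f\|+\|\Delta^{j/2}f\|\right)$, and Young's inequality, this gives $r^{j}\|\nabla^j f\|_{L_2(X)}\le C(r+\varepsilon)\|f\|+C_\varepsilon r^{k}\|\Delta^{k/2}f\|$, and the $\|f\|$ terms are absorbed into the left-hand side once $\varepsilon$ and $r_0$ are fixed small depending only on $X,k,N$. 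Globally the polynomial obstruction disappears (no nonconstant polynomial-like function lies in $L_2(X)$), which is precisely why the order of operations --- sum first, interpolate second --- matters. Finally, your proof of the second inequality is the standard one, but note it produces a constant of order $r^{-d/2}$, and this is unavoidable: a bump function equal to $1$ on a unit ball gives $\sum_j|f(x_j)|^2\ge c\,r^{-d}$. So the constant $C_1$ in the statement must in fact be allowed to depend on $r$ (equivalently, the sum should carry the weight $r^{d/2}$); this imprecision is in the paper's formulation, not in your argument, and is harmless for the way the inequality is used in Section 5.
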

\end{sloppypar}

\section{Band limited functions}

\begin{dfn}
We will say that $f\in L_{2}(X,dx)$ belongs to the class
$B_{\omega}(X)$ if its Helgason-Fourier transform has compact
support in the sense that $\hat{f}(\lambda,b)=0$ for
$\|\lambda\|>\omega.$ Such functions will be called $\omega$-band
limited.
\end{dfn}

We have the following important result, which is a specification
of some results in [15]- [18].
\begin{thm}
A function $f$ belongs to $B_{\omega}(X)$ if and only if the
following Bernstein inequality holds true for all $\sigma >0$,
\begin{equation}
\|\Delta^{\sigma}f\|\leq (\omega^{2}+\|\rho\|^{2})^{\sigma}\|f\|.
\end{equation}

\end{thm}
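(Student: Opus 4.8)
The plan is to prove both directions of the equivalence using the spectral characterization provided by the Helgason-Fourier transform together with the Plancherel formula. The key tool is the multiplier identity (2.8), which tells us that the operator $-\Delta$ acts on the Helgason-Fourier transform side by multiplication by $(\|\lambda\|^{2}+\|\rho\|^{2})$. Iterating this, and extending to arbitrary real powers via the spectral calculus for the self-adjoint positive operator $-\Delta$, the transform of $(-\Delta)^{\sigma}f$ should be $(\|\lambda\|^{2}+\|\rho\|^{2})^{\sigma}\hat{f}(\lambda,b)$. The Plancherel formula then converts the $L_{2}$-norm of $\Delta^{\sigma}f$ into a weighted integral of $\hat{f}$ against the measure $|c(\lambda)|^{-2}d\lambda\,db$ over $\textbf{a}^{*}_{+}\times B$.

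For the forward direction, suppose $f\in B_{\omega}(X)$, so $\hat{f}(\lambda,b)=0$ whenever $\|\lambda\|>\omega$. First I would write
$$
\|\Delta^{\sigma}f\|^{2}=\int_{\textbf{a}^{*}_{+}\times B}(\|\lambda\|^{2}+\|\rho\|^{2})^{2\sigma}|\hat{f}(\lambda,b)|^{2}|c(\lambda)|^{-2}d\lambda\,db.
$$
Since the integrand vanishes for $\|\lambda\|>\omega$, on the support of $\hat{f}$ we have the pointwise bound $(\|\lambda\|^{2}+\|\rho\|^{2})^{2\sigma}\leq(\omega^{2}+\|\rho\|^{2})^{2\sigma}$, using that $t\mapsto t^{2\sigma}$ is nondecreasing. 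Pulling this constant out and applying Plancherel again to the remaining integral yields $\|\Delta^{\sigma}f\|\leq(\omega^{2}+\|\rho\|^{2})^{\sigma}\|f\|$, which is exactly the Bernstein inequality.

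For the converse, suppose the Bernstein inequality holds for every $\sigma>0$; I want to conclude that $\hat{f}$ is supported in $\{\|\lambda\|\leq\omega\}$. The idea is to argue by contradiction: if $\hat{f}$ did not vanish almost everywhere on some set where $\|\lambda\|>\omega$, then there would be $\varepsilon>0$ and a set $E\subset\{\|\lambda\|\geq\omega+\varepsilon\}$ of positive measure (with respect to $|c(\lambda)|^{-2}d\lambda\,db$) on which $|\hat{f}|$ is bounded below. Restricting the Plancherel integral for $\|\Delta^{\sigma}f\|^{2}$ to $E$ gives a lower bound of order $(\left((\omega+\varepsilon)^{2}+\|\rho\|^{2}\right)/(\omega^{2}+\|\rho\|^{2}))^{2\sigma}$ times a fixed positive quantity, after dividing through by $(\omega^{2}+\|\rho\|^{2})^{2\sigma}\|f\|^{2}$. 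Since the ratio in the base exceeds $1$, letting $\sigma\to\infty$ forces the quotient $\|\Delta^{\sigma}f\|^{2}/((\omega^{2}+\|\rho\|^{2})^{2\sigma}\|f\|^{2})$ to diverge, contradicting the hypothesis.

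The main obstacle I anticipate is the rigorous justification of the transform identity $\widehat{(-\Delta)^{\sigma}f}=(\|\lambda\|^{2}+\|\rho\|^{2})^{\sigma}\hat{f}$ for non-integer $\sigma$ and the attendant domain questions: formula (2.8) is stated for $f\in C_{0}^{\infty}(X)$, whereas the theorem concerns general $f\in L_{2}(X)$ satisfying the inequality, so one must pass through the spectral theory of the self-adjoint operator $-\Delta$ and invoke the regularity statement identifying $\mathrm{dom}\,(-\Delta)^{\sigma/2}$ with $H^{\sigma}(X)$. Care is also needed in the converse to ensure that the Bernstein bound holding for all $\sigma$ (rather than just integer powers) genuinely controls the spectral support; this is where the strict monotonicity of $t\mapsto t^{\sigma}$ and the divergence argument do the real work.
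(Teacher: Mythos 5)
Your proof is correct and follows essentially the same route as the paper: the forward direction is the identical Plancherel-plus-support argument (you even fix the paper's typo by correctly using the exponent $2\sigma$ inside the integral), and your converse-by-contradiction is just the contrapositive rearrangement of the paper's direct tail estimate, both hinging on the ratio $\bigl((\omega^{2}+\|\rho\|^{2})/((\omega+\varepsilon)^{2}+\|\rho\|^{2})\bigr)^{2\sigma}\to 0$ as $\sigma\to\infty$. Your closing remarks on the domain questions for non-integer $\sigma$ address a point the paper silently glosses over, but they do not change the substance of the argument.
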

\begin{proof}
 By using the Plancherel formula and (2.8) we obtain that for every $\omega$-
  band limited function
  $$
\|\Delta^{\sigma}f\|^{2}=\int_{\|\lambda\|<\omega}\int_{B}(\|\lambda\|^{2}+\|\rho\|^{2})
^{\sigma} |\widehat{f}(\lambda,b)|^{2}|c(\lambda)|^{-2}d\lambda
db\leq
$$
$$
 (\omega^{2}+\|\rho\|^{2})^{\sigma}
\int_{\textbf{a}^{*}}\int_{B}|\widehat{f}(\lambda,b)|^{2}|c(\lambda)|^{-2}d\lambda
db=(\omega^{2}+\|\rho\|^{2})^{\sigma}\|f\|^{2}.
$$

  Conversely, if $f$ satisfies (3.1), then for any $\varepsilon>0$
  and any $\sigma >0$ we have
  $$
  \int_{\|\lambda\|\geq\omega+\varepsilon}\int_{B}
  |\hat{f}(\lambda,b)|^{2}|c(\lambda)|^{-2}d\lambda db\leq
  $$
  $$
  \int_{\|\lambda\|\geq \omega+\varepsilon}\int_{B}
  (\|\lambda\|^{2}+\|\rho\|^{2})^{-2\sigma}
 ( \|\lambda\|^{2}+\|\rho\|^{2})^{2\sigma}|\hat{f}(\lambda,b)|^{2}|c(\lambda)|
 ^{-2}d\lambda db\leq
  $$
  \begin{equation}
  \left(\frac{\omega^{2}+\|\rho\|^{2}}
  {(\omega+\varepsilon)^{2}+\|\rho\|^{2}}\right)^{2\sigma}
  \|f\|^{2}.
  \end{equation}
It means, that for any $\varepsilon>0$ the function
$\widehat{f}(\lambda,b)$ is zero on $\{\lambda: \|\lambda\|\geq
\omega+\varepsilon\}\times B$. The statement is proved.
\end{proof}

Now we are going to prove the following "density"  result.
\begin{thm}
For every $\omega>0$ and every ball $U\subset X$ restrictions to
$U$ of all functions from $B_{\omega}(X)$ are dense in the space
$L_{2}(U, dx)$.
\end{thm}
\begin{proof}
 Indeed, assume that $\psi\in
L_{2}(U, dx)$ is a function which is orthogonal to all
restrictions to $U$ of all functions from $B_{\omega}(X)$. We
extend $\psi$ by zero outside of $U$. By the Paley-Wiener Theorem
the Helgason-Fourier transform $\hat{\psi}(\lambda,b)$ is
holomorphic in $\lambda$ and at the same time should be orthogonal
to all functions from $L_{2}\left(\mathcal{B}(0,\omega)\times B;
|c(\lambda)|^{-2}d\lambda db\right)$, where
$$
\mathcal{B}(0,\omega)=\{\lambda \in \textbf{a}^{*}:
\|\lambda\|\leq \omega\}.
$$
It implies that $\hat{\psi}$ is zero. Consequently, the function
$\psi$ is zero. It proves the Theorem.
\end{proof}

\section{Uniqueness and stability}

We say that set of points $M=\{x_{j}\}$ is a uniqueness set for
$B_{\omega}(X)$, if every $f\in B_{\omega}(X)$ is uniquely
determined by its values on $M$.

\begin{thm}
If a set $M=\{x_{j}\}$ is a uniqueness set for the space
$B_{\omega}(X)$, then for any bi-invariant distribution of compact
support $\phi$ every function $f\in B_{\omega}(X)$ is uniquely
determined by the set of values $f\ast\phi(x_{j})$.
\end{thm}
\begin{proof}
If $f\in B_{\omega}(X)$ then because
$$
\widehat{f\ast\phi}=\widehat{f}\widehat{\phi}
$$
the function $f\ast\phi$ also belongs to $B_{\omega}(X)$ and by
assumption is uniquely determined by its values
$f\ast\phi(x_{j})$. But by the Paley-Wiener Theorem the zero set
of the function $\widehat{\phi}$ has measure zero. Thus, the
equality
$$
\widehat{f_{1}\ast\phi}-\widehat{f_{2}\ast\phi}=
\widehat{\phi}(\widehat{f_{1}}-\widehat{f_{2}})\equiv 0,
$$
implies that $f_{1}=f_{2}$ as $L_{2}$-functions. The statement is
proved.
\end{proof}

For any uniqueness set $M$ and any $\omega>0$ the notation
$l_{2}^{\omega}(M)$ will be used for a linear subspace of all
sequences $\{v_{j}\}$ in $l_{2}$ for which there exists a function
$f$ in $B_{\omega}(X)$ such that
$$
f\ast\phi(x_{j})=v_{j}, x_{j}\in M.
$$
In general $l_{2}^{\omega}(M)\neq l_{2}$.

\begin{dfn}
\textit{A linear reconstruction method $R$} from a uniqueness set
$M$ is a linear operator
$$
R:l_{2}^{\omega}(M)\rightarrow B_{\omega}(X)
$$
such that
$$
R: \{f\ast\phi(x_{j})\}\rightarrow f.
$$

 The reconstruction method is said to be stable, if it is
continuous in topologies induced respectively by $l_{2}$ and
$L_{2}(X)$.
\end{dfn}

The  Theorem 4.1 does not guarantee stability but the next one
does.

\begin{sloppypar}
\begin{thm}
There exists a constant $c=c(X,N)>0$ such
that for any $\omega>0,$ any $(r,N)$-lattice
$Z(\{x_{\mu}\},r,N)$ with
$$
0<r<c(\omega^{2}+\|\rho\|^{2})^{-1/2},
$$
and every bi-invariant distribution $\phi$ with compact support,
whose Helgason-Fourier transform $\widehat{\phi}$ does not have
zeros on $\mathcal{B}(0,\omega),$ every function $f\in
B_{\omega}(X)$ is uniquely determined and   reconstruction method
from the set of samples $f\ast\phi(x_{j})$ is stable.
\end{thm}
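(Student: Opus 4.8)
The plan is to reduce everything to the sampling inequality of Theorem 2.2, applied not to $f$ itself but to its convolution $g = f\ast\phi$, and then to transfer the resulting estimate back to $f$ using the non-vanishing of $\widehat{\phi}$.

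First I would observe that $g = f\ast\phi$ again belongs to $B_{\omega}(X)$: by the convolution identity (2.4), $\widehat{g}(\lambda,b)=\widehat{f}(\lambda,b)\widehat{\phi}(\lambda)$, which vanishes whenever $\|\lambda\|>\omega$. Fixing any $k>d/2$ and applying Theorem 2.2 to $g$ gives
$$
\|g\|\leq C\left\{r^{d/2}\left(\sum_{x_{j}\in Z}|g(x_{j})|^{2}\right)^{1/2}+r^{k}\|\Delta^{k/2}g\|\right\}.
$$
Since $g\in B_{\omega}(X)$, the Bernstein inequality (3.1) yields $\|\Delta^{k/2}g\|\leq(\omega^{2}+\|\rho\|^{2})^{k/2}\|g\|$. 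The second term then becomes $Cr^{k}(\omega^{2}+\|\rho\|^{2})^{k/2}\|g\|$, and this is precisely where the constant $c$ is born: choosing $c=(2C)^{-1/k}$ (and shrinking it, if necessary, so that also $r<r_{0}$) and requiring $r<c(\omega^{2}+\|\rho\|^{2})^{-1/2}$ forces $Cr^{k}(\omega^{2}+\|\rho\|^{2})^{k/2}<1/2$, so this term can be absorbed into the left-hand side, leaving
$$
\|g\|\leq 2Cr^{d/2}\left(\sum_{x_{j}\in Z}|g(x_{j})|^{2}\right)^{1/2}.
$$

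Next I would transfer this lower frame bound from $g$ to $f$. Because $\widehat{\phi}$ is analytic in $\lambda$ (Paley-Wiener) and does not vanish on the compact ball $\mathcal{B}(0,\omega)$, there is a constant $m>0$ with $|\widehat{\phi}(\lambda)|\geq m$ for all $\|\lambda\|\leq\omega$. The Plancherel formula then gives $\|g\|^{2}=\|f\ast\phi\|^{2}\geq m^{2}\|f\|^{2}$, whence $m\|f\|\leq\|g\|$. Combining with the previous display and recalling $g(x_{j})=f\ast\phi(x_{j})$ produces the stability estimate
$$
\|f\|\leq\frac{2C}{m}\,r^{d/2}\left(\sum_{x_{j}\in Z}|f\ast\phi(x_{j})|^{2}\right)^{1/2},
$$
which immediately gives uniqueness (if all samples vanish then $f=0$) and shows that any linear reconstruction method is continuous from $l_{2}$ into $L_{2}(X)$. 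For the matching upper bound, the second inequality of Theorem 2.2 together with the norm equivalence $\|g\|_{H^{k}(X)}\sim\|g\|+\|\Delta^{k/2}g\|$ and Bernstein again yield $\left(\sum_{x_{j}}|f\ast\phi(x_{j})|^{2}\right)^{1/2}\leq C_{1}(1+(\omega^{2}+\|\rho\|^{2})^{k/2})\|f\ast\phi\|\leq C_{2}\|f\|$, so the samples form a genuine frame.

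I expect the decisive step to be the absorption argument in the second paragraph, since it is the mechanism that pins down the admissible radius $r$ as a multiple of $(\omega^{2}+\|\rho\|^{2})^{-1/2}$ and thereby fixes the constant $c$; the only place the hypothesis on $\phi$ enters is the uniform lower bound $|\widehat{\phi}|\geq m>0$, which rests on analyticity and on the compactness of $\mathcal{B}(0,\omega)$.
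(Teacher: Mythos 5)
Your proposal is correct and follows essentially the same route as the paper: Theorem 2.2 plus the Bernstein inequality (3.1) with the absorption step fixing $c\sim C^{-1}$, followed by the transfer to $f$ via Plancherel and the positive lower bound of $|\widehat{\phi}|$ on the compact ball $\mathcal{B}(0,\omega)$ (the paper writes this as $C_{1}=\bigl(\inf_{\mathcal{B}(0,\omega)}\widehat{\phi}\bigr)^{-1}$ and applies its inequality (4.1) to $f\ast\phi$, which is exactly your estimate for $g=f\ast\phi$ in reversed order). The upper frame bound you add via the second inequality of Theorem 2.2 is not needed for the stability claim here, but it is a correct bonus that the paper defers to Theorem 5.1.
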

\end{sloppypar}
\begin{proof}
The formulas (2.10) and (3.1) imply, that there exist constants
$C,c$ such that for any $\omega>0$, any $Z=Z(\{x_{\mu}\}, r, N)$
with
$$
0<r<c(\omega^{2}+\|\rho\|^{2})^{-1/2}
$$
and every $f\in
B_{\omega}(X)$
\begin{equation}
\|f\|\leq Cr^{-d/2}\left(\sum  _{x_{j}\in Z}
|f(x_{j})|^{2}\right)^{1/2}.
\end{equation}
Indeed, by the Theorem 2.2 and the Theorem 3.1 we have
\begin{equation}
\|f\|\leq C\left\{r^{d/2}\left(\sum_{x_{j}\in Z}
|f(x_{j})|^{2}\right)^{1/2}+r^{k}(\omega^{2}+\|\rho\|^{2})^{k/2}\|f\|\right\},
\end{equation}
where $k>d/2$. If we  choose
$$
0<r<c(\omega^{2}+\|\rho\|^{2})^{-1/2}, c=C^{-1},
$$
then the inequality (4.2) gives the  inequality (4.1) which
implies uniqueness. Next, by applying the Plancherel Theorem and
the inequality (4.1) to the function $f\ast\phi$ we obtain the
inequality
\begin{equation}
\|f\|=\|\widehat{f}\|=\|\widehat{\phi}^{-1}\widehat{f}\widehat{\phi}\|\leq
C_{1}\|\widehat {f\ast\phi}\|\leq
$$
$$
Cr^{-d/2}\left(\sum_{x_{j}\in
Z(x_{j},r,N)}|f\ast\phi(x_{j})|^{2}\right)^{1/2},
\end{equation}
where
$$
C_{1}=\left(\inf_{\lambda\in \mathcal{B}(0,\omega)}
\widehat{\phi}(\lambda)\right)^{-1}.
$$
This inequality  (4.3) implies  stability of the reconstruction
method from the samples $f\ast\phi(x_{j})$. The Theorem is proved.
\end{proof}

\section{Reconstruction in terms of frames}

In  this section we are going to present a way of reconstruction
of a function $f\in B_{\omega}(X)$ from a set of samples of its
convolution $f\ast\phi$ in terms of frames in Hilbert spaces [1],
[5].

If $\delta_{x_{j}}$ is a Dirac distribution at a point $x_{j}\in
X$ then according to the inversion formula for the
Helgason-Fourier transform we have
$$
\left<\delta_{x_{j}},f\right>= w^{-1}\int_{\textbf{a}^{*}\times
B}\hat{f}(\lambda,b)e^{(i\lambda+\rho)(A(x_{j},b))}|c(\lambda)|^{-2}d\lambda
db.
$$

It implies that if $f\in L_{2}(X)$ then the action on
$\hat{f}(\lambda, b)$ of the Helgason-Fourier transform $
\widehat{\delta_{x_{j}}}$ of $ \delta_{x_{j}}$ is given by the
formula
\begin{equation}
\hat{f}(\lambda, b) \rightarrow \left<\widehat{\delta_{x_{j}}},
\hat{f}\right> =w^{-1}\int_{\textbf{a}^{*}\times
B}e^{(i\lambda+\rho)(A(x_{j},b))}\hat{f}(\lambda,
b)|c(\lambda)|^{-2}d\lambda db.
\end{equation}
\begin{thm}
There exists a constant $ c=c(X,N)$ such that for any given
$\omega>0$, for every $(r,N)$-lattice  $Z=Z(\{x_{\mu}\},r, N)$
with
$$
0<r<c(\omega^{2}+\|\rho\|^{2})^{-1/2},
$$
the following statements hold true.

1)  The set of functions
 $\{\widehat{\delta_{x_{j}}\}}$ forms a frame in the space
\begin{equation}
L_{2}\left(\mathcal{B}(0,\omega)\times B;
|c(\lambda)|^{-2}d\lambda db\right)
\end{equation}
 and there exists a  frame
 $\{\Theta_{j}\}$ in the space $B_{\omega}(X)$
such that every $\omega$-band limited function $f\in
B_{\omega}(X)$ can be reconstructed from a set of samples
$\{\delta_{x_{j}}(f)\}$ by using the formula
\begin{equation}
f=\sum_{x_{j}\in Z}\delta_{x_{j}}(f)\Theta_{j}.
\end{equation}

2) If the Helgason-Fourier transform of a compactly supported
distribution $\phi$ does not have zeros on the ball
$\mathcal{B}(0,\omega)$, then the Helgason-Fourier transforms of
the distributions
$$
 f\rightarrow f\ast\phi(x_{j})
 $$
 form a frame in the space $
L_{2}\left(\mathcal{B}(0,\omega)\times B;
|c(\lambda)|^{-2}d\lambda db\right)$ and
 there exists a frame $\{\Phi_{j}\}$
in the space $B_{\omega}(X)$ such, that every $f\in B_{\omega}(X)$
can be reconstructed from the samples of the convolution
$f\ast\phi$ by using the formula

\begin{equation}
f= \sum_{x_{j}\in Z}\delta_{x_{j}}(f\ast\phi)\Phi_{j}.
\end{equation}
\end{thm}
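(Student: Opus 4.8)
The plan is to read both statements off the abstract theory of frames, with the crucial two-sided frame bounds supplied by the sampling inequalities already established. Throughout I would work in the Hilbert space $B_{\omega}(X)$, which is a closed subspace of $L_{2}(X)$; since the Helgason-Fourier transform maps $B_{\omega}(X)$ isometrically (up to the Plancherel constant) onto the space (5.2), a collection of functionals is a frame in (5.2) exactly when the corresponding collection is a frame in $B_{\omega}(X)$, so it suffices to argue in $B_{\omega}(X)$.

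First, for part 1, I would check that the point evaluations $f\mapsto\delta_{x_{j}}(f)=f(x_{j})$ are bounded linear functionals on $B_{\omega}(X)$ and determine the frame bounds. The upper bound is the main item to verify: the second inequality of Theorem 2.2 gives $\left(\sum_{x_{j}\in Z}|f(x_{j})|^{2}\right)^{1/2}\leq C_{1}\|f\|_{H^{k}(X)}$, and for a band limited $f$ the Bernstein inequality (3.1) together with the regularity theorem bounds $\|f\|_{H^{k}(X)}$ by a multiple of $\|f\|$ depending only on $\omega$; hence $\sum_{x_{j}\in Z}|f(x_{j})|^{2}\leq B\|f\|^{2}$. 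In particular each evaluation is bounded, so by the Riesz representation theorem there are $\theta_{j}\in B_{\omega}(X)$ with $f(x_{j})=\langle f,\theta_{j}\rangle$. The matching lower bound is precisely inequality (4.1) of Theorem 4.2, valid once $0<r<c(\omega^{2}+\|\rho\|^{2})^{-1/2}$, which reads $C^{-2}r^{d}\|f\|^{2}\leq\sum_{x_{j}\in Z}|f(x_{j})|^{2}$. Thus $\{\theta_{j}\}$ satisfies $A\|f\|^{2}\leq\sum_{x_{j}}|\langle f,\theta_{j}\rangle|^{2}\leq B\|f\|^{2}$ and is a frame in $B_{\omega}(X)$, equivalently $\{\widehat{\delta_{x_{j}}}\}$ (acting through (5.1)) is a frame in (5.2).

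The reconstruction is then standard. The frame operator $Sf=\sum_{x_{j}\in Z}\langle f,\theta_{j}\rangle\theta_{j}$ is bounded, self-adjoint, positive and boundedly invertible, so setting $\Theta_{j}=S^{-1}\theta_{j}$ produces the canonical dual frame and the expansion $f=\sum_{x_{j}\in Z}\langle f,\theta_{j}\rangle\Theta_{j}=\sum_{x_{j}\in Z}\delta_{x_{j}}(f)\,\Theta_{j}$, which is formula (5.3).

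For part 2 I would run the same scheme with $f$ replaced by $f\ast\phi$ at the sampling step. Because $\widehat{f\ast\phi}=\widehat{f}\,\widehat{\phi}$ is supported in $\mathcal{B}(0,\omega)$, the convolution $f\ast\phi$ again lies in $B_{\omega}(X)$ and $\|f\ast\phi\|\leq\left(\sup_{\mathcal{B}(0,\omega)}|\widehat{\phi}|\right)\|f\|$; applying the upper estimate of Theorem 2.2 to $f\ast\phi$ yields the upper frame bound $\sum_{x_{j}}|f\ast\phi(x_{j})|^{2}\leq B'\|f\|^{2}$. The lower frame bound is exactly inequality (4.3) of Theorem 4.2, whose validity is where the hypothesis that $\widehat{\phi}$ has no zeros on $\mathcal{B}(0,\omega)$ enters. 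Representing the functionals by elements $\psi_{j}\in B_{\omega}(X)$ via $f\ast\phi(x_{j})=\langle f,\psi_{j}\rangle$, I obtain a frame; inverting its frame operator $S'$ and putting $\Phi_{j}=(S')^{-1}\psi_{j}$ gives formula (5.4). I do not expect a serious obstacle: the analytic heart, namely the lower frame bounds, has already been carried out in Theorem 4.2, so the only points requiring care are the boundedness of the sampling functionals, which is Theorem 2.2 combined with the Bernstein inequality, and the invertibility of the frame operators, which is automatic from the two-sided bounds.
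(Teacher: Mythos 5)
Your proposal is correct, and its core is the same as the paper's: both arguments obtain the two-sided frame inequality from the second inequality of Theorem 2.2 combined with the Bernstein inequality (3.1) for the upper bound, and from the sampling inequalities of Theorem 4.2 for the lower bound, and then invoke invertibility of the frame operator to produce a dual frame and the reconstruction formulas (5.3), (5.4). The one place where your route genuinely differs is part 2. The paper deduces part 2 directly from part 1: each $\Phi_{j}$ is defined as the inverse Helgason-Fourier transform of $\widehat{\Theta_{j}}/\widehat{\phi}$, which is legitimate because $\widehat{\phi}$ is analytic and zero-free on the compact ball $\mathcal{B}(0,\omega)$, so multiplication by $1/\widehat{\phi}$ is a bounded invertible operator on $L_{2}\left(\mathcal{B}(0,\omega)\times B; |c(\lambda)|^{-2}d\lambda db\right)$ and carries frames to frames. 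You instead re-run the entire frame argument from scratch for the functionals $f\rightarrow f\ast\phi(x_{j})$: the lower bound from inequality (4.3), the upper bound from a direct Plancherel estimate $\|f\ast\phi\|\leq \sup_{\mathcal{B}(0,\omega)}|\widehat{\phi}|\,\|f\|$ followed by Theorem 2.2, and then inversion of a second frame operator $S'$. Both are valid; the paper's version buys an explicit expression of $\Phi_{j}$ in terms of the already-constructed $\Theta_{j}$ with no further operator inversion, while yours buys a uniform treatment of the two parts and produces the canonical dual frame with its optimal frame bounds (note your $\Phi_{j}$ and the paper's need not coincide, since the paper's dual is not the canonical one). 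The remaining difference is cosmetic: the paper phrases everything on the Fourier side in $\Lambda_{2}$, while you work in $B_{\omega}(X)$ via the Riesz representation theorem; these are equivalent by the Plancherel isometry, as you observe at the outset. A minor point in your favor: for part 1 you correctly cite the point-sample inequality (4.1) as the lower bound, whereas the paper's text cites (4.3), which strictly speaking concerns the convolution samples.
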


\begin{proof}

An application of the Plancherel formula along with the inequality
(4.3) and the second inequality in the Theorem 2.2 give that for
any $f\in B_{\omega}(X)$

\begin{equation}
C_{1}\|\widehat{f}\|_{\Lambda_{2}}\leq \left(\sum_{x_{j}\in
Z}|<\widehat{\delta_{x_{j}}},\widehat{f}>|^{2}\right)^{1/2}\leq
C_{2}\|\widehat{f}\|_{\Lambda_{2}},
\end{equation}
where $<.,.>$ is the scalar product in the space $\Lambda_{2}=
L_{2}\left(\mathcal{B}(0,\omega)\times B;
|c(\lambda)|^{-2}d\lambda db\right)$. The first statement of the
Theorem is just another interpretation of the inequalities (5.5).

We  consider the so called frame operator
$$
F(\widehat{f})=\sum_{j}<\widehat{\delta_{x_{j}}},\widehat{f}>\widehat{\delta_{x_{j}}}.
$$
It is known that the operator $F$ is invertible and the formula
\begin{equation}
\widehat{\Theta_{j}}=F^{-1}\widehat{\delta_{x_{j}}}.
\end{equation} gives a dual frame $\widehat{\Theta_{j}}$ in
 $ L_{2}\left(\mathcal{B}(0,\omega)\times B;
|c(\lambda)|^{-2}d\lambda db\right)$. A reconstruction formula of
a function $f$ can be written in terms of the dual frame as
\begin{equation}
\widehat{f}=\sum_{j}<\widehat{\Theta_{j}},\widehat{f}>\widehat{\delta_{x_{j}}}=
\sum_{j}<\widehat{\delta_{x_{j}}},\widehat{f}>\widehat{\Theta_{j}},
\end{equation}
where inner product is taken in the space $
L_{2}\left(\mathcal{B}(0,\omega)\times B;
|c(\lambda)|^{-2}d\lambda db\right)$.

Taking the Helgason-Fourier transform of both sides of the last
formula we obtain the formula (5.3) of our Theorem.

The second statement of the Theorem is a consequence of the first
one, of the Plancherel formula and of our assumption that
$\widehat{\phi}$ does not have zeros in $\mathcal{B}(0,\omega)$.
It is clear, that in the formula (5.4) every $\Phi_{j}$ is the
inverse Helgason-Fourier transform of the function
$\widehat{\Theta_{j}}/ \widehat{\phi}$.
\end{proof}

In the classical case when $X$ is the one-dimensional Euclidean
space we have
\begin{equation}
\widehat{\delta_{x_{j}}}(\lambda)= e^{i x_{j}\lambda}.
\end{equation}
In this situation the first statement of the Theorem  means that
the complex exponentials $e^{i x_{j}\lambda}, x_{j}\in
Z(\{x_{\mu}\},r, N )$ form a frame in the space $L_{2}([-\omega,
\omega])$.

Note that in the case of a uniform point-wise sampling in the
space $L_{2}(R)$ this result gives the classical sampling formula
$$
f(t)= \sum f(\gamma n\Omega )\frac{\sin(\omega (t-\gamma n\Omega
))}{\omega (t-\gamma n\Omega )}, \Omega =\pi /\omega, \gamma<1,
$$
with a certain oversampling.

\section{Reconstruction by using polyharmonic splines on $X$}

To present our second method of reconstruction we consider the
following optimization problem. Although the results of this
section are similar to the corresponding results from our paper
[17], the presence of the Helgason-Fourier transform allows to
make all our constructions much more explicit.

\bigskip

 \textsl{For a given $(r,N)$-lattice $Z=Z(\{x_{\mu}\},r,N)$
find a function $L_{\nu}^{k}$ in the Sobolev space $H^{2k}(X)$ for
which $L_{\nu}^{k}(x_{\mu})=\delta_{\nu\mu}, x_{\mu}\in Z$, and
which minimizes the functional $u\rightarrow \| \Delta^{k} u\|$}.
Here the $\delta_{\nu\mu}$ is the Kronecker delta.

\begin{thm}
For a given $(r,N)$-lattice $Z(\{x_{\mu}\},r,N)$ the following
statements are true.

1) The above optimization problem does have a unique solution.

2) For every $L_{\nu}^{k}$ there exists a sequence
$\alpha=\{\alpha_{j,\nu}\}\in l_{2}$ such that
\begin{equation}
\widehat{L_{\nu}^{k}}(\lambda,b)= \sum_{x_{j}\in Z}
\alpha_{j,\nu}\widehat{\delta_{j}}(\lambda,b),
\end{equation}
where $\widehat{\delta_{j}}$ is the Helgason-Fourier transform of
the distribution $\delta_{j}$ and is given by the formula

\begin{equation}
\widehat\delta_{x_{j}}=e^{(-i\lambda+\rho)(A(x_{j},b))}, x_{j}\in
Z.
\end{equation}.
\end{thm}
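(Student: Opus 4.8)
The plan is to treat this as a constrained minimization in a Hilbert space and obtain the solution by an orthogonal projection argument. First I would set up the right space: since $Z=Z(\{x_\mu\},r,N)$ is a uniqueness set (Theorem 4.2, via inequality (4.1)), point evaluation $u\mapsto u(x_\mu)$ is a bounded linear functional on $H^{2k}(X)$ for $2k>d/2$, by the Sobolev embedding theorem mentioned after (2.5). Thus the interpolation constraints $L^k_\nu(x_\mu)=\delta_{\nu\mu}$ define a closed affine subspace $V_\nu\subset H^{2k}(X)$. The functional to minimize is $u\mapsto\|\Delta^k u\|$, which by the regularity theorem is equivalent to a seminorm on $H^{2k}(X)$; I would first check it is in fact a genuine norm here, since $-\Delta$ is positive definite on $L_2(X)$ so $\Delta^k u=0$ forces $u=0$.

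For part (1), I would argue existence and uniqueness of the minimizer by the standard projection theorem. The affine set $V_\nu$ is nonempty (because $Z$ is discrete enough that a smooth interpolant exists), closed, and convex; minimizing the strictly convex functional $\|\Delta^k u\|^2$ over $V_\nu$ gives a unique minimizer, which is the point of $V_\nu$ closest to the origin in the inner product $\langle u,v\rangle_k=\langle\Delta^k u,\Delta^k v\rangle$. Equivalently, writing $V_0$ for the linear subspace $\{u: u(x_\mu)=0\text{ for all }\mu\}$, the minimizer $L^k_\nu$ is characterized by the orthogonality condition $\langle\Delta^k L^k_\nu,\Delta^k w\rangle=0$ for every $w\in V_0$. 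This is the variational Euler equation of the problem and is the crux of the whole argument.

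For part (2), I would translate that orthogonality condition through the Helgason-Fourier transform. Using the Plancherel formula together with (2.8), the inner product $\langle\Delta^k L^k_\nu,\Delta^k w\rangle$ becomes an integral of $(\|\lambda\|^2+\|\rho\|^2)^{2k}\,\widehat{L^k_\nu}(\lambda,b)\overline{\widehat w(\lambda,b)}$ against $|c(\lambda)|^{-2}d\lambda\,db$. The orthogonality to all $w\in V_0$ says precisely that the distribution whose Helgason-Fourier transform is $(\|\lambda\|^2+\|\rho\|^2)^{2k}\widehat{L^k_\nu}(\lambda,b)$ is supported on the sampling set $Z$, hence is a countable combination $\sum_j c_{j,\nu}\,\delta_{x_j}$ of point masses. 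Applying the operator $(\|\lambda\|^2+\|\rho\|^2)^{-2k}$, which corresponds to convolution with the fundamental solution of $\Delta^{2k}$, and using the transform formula (6.2) for $\widehat{\delta_{x_j}}$, I would obtain the representation (6.1). Finally, the coefficient sequence $\alpha=\{\alpha_{j,\nu}\}$ lands in $l_2$ because $L^k_\nu\in H^{2k}(X)$, so its samples are square-summable by the second inequality of Theorem 2.2.

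The main obstacle I anticipate is making the support argument in part (2) rigorous: passing from ``orthogonal to all $w$ vanishing on $Z$'' to ``the corresponding distribution is carried by $Z$'' requires knowing that $V_0$ is exactly the $\langle\cdot,\cdot\rangle_k$-orthogonal complement of the span of the $\delta_{x_j}$ pulled back through $\Delta^{2k}$, and one must verify that this span is closed so that no limiting coefficients escape $l_2$. Controlling that closedness, together with the density statement of Theorem 3.2 ensuring the interpolation problem is well posed, is where the real work lies; the existence and uniqueness in part (1) are comparatively routine once the Hilbert-space framework is fixed.
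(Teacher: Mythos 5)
Your route is essentially the paper's: for part (1) both proofs are Hilbert-space projection arguments (the paper makes the functional into an equivalent Sobolev norm by adding the constant term $\left(\sum_j|v_j|^2\right)^{1/2}$ and invoking Theorem 2.2, then sets $L^k_\nu=f-Pf$; your variant, using that on a non-compact symmetric space $-\Delta$ has spectrum in $[\|\rho\|^2,\infty)$ so that $\|\Delta^k\cdot\|$ is by itself an equivalent norm on $H^{2k}(X)$, is a legitimate and even slightly cleaner shortcut, provided you state the full norm equivalence and not merely injectivity, since coercivity is what makes $V_\nu$ closed and the minimizing sequence convergent). For part (2) both you and the paper pass from the variational condition $\langle\Delta^kL^k_\nu,\Delta^kh\rangle=0$ for all $h$ vanishing on $Z$ to the distributional equation $\Delta^{2k}L^k_\nu=\sum_j\alpha_{j,\nu}\delta_{x_j}$ (pure point masses, no derivatives of deltas, precisely because the functional annihilates every $h$ that merely vanishes on $Z$), and then apply the Helgason--Fourier transform together with formula (6.2).

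The one genuine flaw is your justification that $\alpha=\{\alpha_{j,\nu}\}$ lies in $l_2$. You argue this holds "because $L^k_\nu\in H^{2k}(X)$, so its samples are square-summable by the second inequality of Theorem 2.2." That is a non sequitur: the samples of $L^k_\nu$ on $Z$ are $\delta_{\nu\mu}$ by construction, trivially square-summable, and they are not the $\alpha_{j,\nu}$. The coefficients $\alpha_{j,\nu}$ are dual objects --- the weights of the point masses representing the functional $h\mapsto\langle\Delta^kL^k_\nu,\Delta^kh\rangle$ --- and their square-summability amounts to boundedness of this functional with respect to the $l_2$-norm of the sample vector $\{h(x_j)\}$, which does not follow from anything you cite. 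The standard way to get it is to show the sampling map $S:h\mapsto\{h(x_j)\}$ admits a bounded right inverse $l_2\rightarrow H^{2k}(X)$, e.g.\ built from translated bump functions with uniformly bounded $H^{2k}$-norms supported in the disjoint balls $B(x_j,r/4)$ of the lattice; then $|\sum_j\alpha_{j,\nu}\overline{w_j}|\leq C\|w\|_{l_2}$ for all $w\in l_2$, and Riesz representation gives $\alpha\in l_2$. (The paper itself asserts the $l_2$ property without proof, so the omission is in good company, but the reason you give would not survive scrutiny; your own closing remark about needing the span of the $\widehat{\delta_{x_j}}$ to be closed is pointing at exactly this issue, and the right-inverse construction is what resolves it.)
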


\begin{proof}
The Theorem 2.2 implies that for a fixed set of values
$\{v_{j}\}\in l_{2}$ the minimum of the functional
$$
u\rightarrow \|\Delta^{k}u\|
$$
 is the same as the minimum of the functional
 $$
 u\rightarrow \|\Delta^{k}u\|+\left
 (\sum_{x_{j}\in Z}|v_{j}|^{2}\right)^{1/2},
$$
with a set of constrains $ u(x_{j})=v_{j}$, $ x_{j}\in
Z(\{x_{\mu}\},r,N)$.

Since the last functional is equivalent to the Sobolev norm it
allows to perform the following procedure.

For the given sequence $v_{j}=\delta_{j,\nu}$ where $ \nu $ is
fixed natural number  consider a function $f$ from $H^{2k}(X)$
such that $f(x_{j})=v_{j}.$ Let $Pf$
 denote the orthogonal projection of this function $f$ (in the Hilbert
space $H^{2k}(X)$ with natural inner product) on the subspace of
functions vanishing on $Z(\{x_{\mu}\},r,N)$. Then the function
$L^{k}_{\nu}=f-Pf$ will be the unique solution of the above
minimization problem.

 The condition  that $L_{\nu}^{k}\in H^{2k}(X)$ is a solution to the
minimization problem implies, that $\Delta^{k}L_{\nu}^{k}$ should
be orthogonal to all functions of the form $\Delta^{k}h,$ where
$h\in H^{2k}(X)$ and has the property $h(x_{\mu})=0$ for all
$\mu$. This leads to a differential equation
$$
\int_{X}(\Delta^{k}L_{\nu}^{k}) \overline{\Delta^{k}h}dx=
\sum_{j}\alpha_{j,\nu} \overline{h}(x_{j}),
$$
for an $l_{2}$- sequence $\{\alpha_{j,\nu}\}$. Thus, in the sense
of distributions
\begin{equation}
\Delta ^{2k}L_{\nu}^{k} =\sum_{j}\alpha_{j,\nu}\delta_{x_{j}}.
\end{equation}

Taking the Helgason-Fourier transform of both sides of (6.3) in
the sense of distributions we obtain the  equation (6.1).

The Theorem is proved.
\end{proof}

We will need the following Lemma.

\begin{lem}
If for some $f\in H^{2\sigma}(X), a,\sigma>0,$
\begin{equation}
\|f\|\leq a\|\Delta^{\sigma}f\|,
\end{equation}
then for the same $f, a, \sigma$ and all $s\geq 0, m=2^{l}, l=0,
1, ...,$
\begin{equation}
\|\Delta^{s}f\|\leq a^{m}\|\Delta^{m\sigma+s}f\|,
\end{equation}
if $f\in H^{2(m\sigma+s)}(X).$

\end{lem}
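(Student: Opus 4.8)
The plan is to run an induction on $l$, reducing the whole statement to the single doubling step $m\mapsto 2m$ together with a base inequality at $l=0$. First I would record the trivial doubling: suppose the asserted estimate $\|\Delta^{s}f\|\leq a^{m}\|\Delta^{m\sigma+s}f\|$ is already known for a given $m=2^{l}$ and for every $s\geq 0$. Applying it once with the exponent $s$ and once with the exponent $m\sigma+s$ and chaining the two bounds gives
$$\|\Delta^{s}f\|\leq a^{m}\|\Delta^{m\sigma+s}f\|\leq a^{2m}\|\Delta^{2m\sigma+s}f\|,$$
which is precisely the statement for $2m=2^{l+1}$. Hence the induction step costs nothing beyond the induction hypothesis, and everything collapses to the case $l=0$, namely the base inequality $\|\Delta^{s}f\|\leq a\,\|\Delta^{\sigma+s}f\|$ for all $s\geq 0$.

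The substance of the proof is therefore this base inequality, where the essential difficulty is that the hypothesis $\|f\|\leq a\|\Delta^{\sigma}f\|$ is only an inequality at $s=0$ and must be transported to every $s\geq 0$. To do this I would use the fact, recalled in Section 2, that $-\Delta$ is a self-adjoint positive definite operator, and pass to its spectral resolution $\{E_{\lambda}\}$, so that
$$\|\Delta^{t}f\|^{2}=\int_{0}^{\infty}\lambda^{2t}\,d\langle E_{\lambda}f,f\rangle,\qquad t\geq 0,$$
for every $f$ in the relevant domain. Applying the H\"{o}lder inequality (equivalently, Cauchy--Schwarz) to this spectral integral at an interpolated exponent $t=\theta t_{0}+(1-\theta)t_{1}$ shows at once that $t\mapsto\log\|\Delta^{t}f\|$ is a convex function on $[0,\infty)$.

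Once this convexity is available the base inequality is immediate: for a convex function the increment over an interval of fixed length $\sigma$ is nondecreasing in its left endpoint, whence
$$\log\|\Delta^{\sigma+s}f\|-\log\|\Delta^{s}f\|\ \geq\ \log\|\Delta^{\sigma}f\|-\log\|f\|\ \geq\ -\log a$$
for every $s\geq 0$, the final inequality being exactly the hypothesis. Exponentiating yields the base inequality, and the doubling induction then gives the lemma for all $m=2^{l}$. The assumption $f\in H^{2(m\sigma+s)}(X)$ is precisely what makes every norm that occurs finite, so that the spectral integrals converge and the divisions used are legitimate.

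I expect the main obstacle to be exactly the transport step of the preceding paragraph. A finite number of Cauchy--Schwarz applications along the arithmetic progression of exponents $\{s+k\sigma\}_{k\geq 0}$ only produces log-convexity of that discrete sequence for each fixed $s$, and cannot by itself link the single hypothesis at $s=0$ to the shifted inequalities at $s>0$; it is the continuous log-convexity furnished by the spectral theorem that accomplishes this, and this is the one point at which the positivity and self-adjointness of $-\Delta$ are genuinely needed. I note finally that the same argument in fact delivers the estimate for every positive integer $m$, the restriction to $m=2^{l}$ being only a convenient device for organizing the induction.
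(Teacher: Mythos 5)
Your proof is correct, but it follows a genuinely different route from the paper's. The paper also works spectrally, but very concretely: it rewrites the hypothesis $\|f\|\leq a\|\Delta^{\sigma}f\|$ via the Plancherel formula for the Helgason--Fourier transform, splits the spectral integral at the threshold $\|\lambda\|^{2}+\|\rho\|^{2}=a^{-1/\sigma}$, rearranges so that both sides are integrals of nonnegative quantities, and then multiplies the integrands by the monotone weight $a^{2}(\|\lambda\|^{2}+\|\rho\|^{2})^{2\sigma}$, which is $\leq 1$ below the threshold and $\geq 1$ above it, so the inequality is only ``improved''; recombining yields $\|\Delta^{\sigma}f\|\leq a\|\Delta^{2\sigma}f\|$, hence $\|f\|\leq a^{2}\|\Delta^{2\sigma}f\|$, and iterating this doubling (which squares the constant each time, whence the restriction $m=2^{l}$) and then repeating the same weight trick with $a^{2s}(\|\lambda\|^{2}+\|\rho\|^{2})^{2s\sigma}$ handles the shift by $s$. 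Your argument replaces this threshold-splitting, Chebyshev-type correlation trick with the log-convexity of $t\mapsto\log\|\Delta^{t}f\|$ (H\"older on the spectral integral) plus the monotonicity of increments of convex functions; this is cleaner and more standard, it isolates the genuine difficulty exactly where you say it is (transporting the hypothesis from $s=0$ to all $s\geq 0$), and it yields the inequality for \emph{every} positive integer $m$, showing the dyadic restriction in the statement is an artifact of the paper's doubling induction rather than anything essential. What the paper's version buys is that it stays entirely inside the explicit Helgason--Fourier framework used throughout, with nothing but pointwise manipulation of integrals. One small point to make your write-up airtight: to take logarithms you should note that $\|\Delta^{t}f\|>0$ whenever $f\neq 0$, which holds here because the spectrum of $-\Delta$ on $L_{2}(X)$ is bounded below by $\|\rho\|^{2}>0$ (the case $f=0$ being trivial).
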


\begin{proof} In what follows
we use the notation $d\mu(\lambda)$ for the measure
$|c(\lambda)|^{-2}d\lambda$. The Plancherel Theorem allows to
write our assumption (6.4) in the form
$$
\|f\|^{2}=\int_{\textbf{a}_{+}\times
B}|\hat{f}(\lambda,b)|^{2}d\mu(\lambda)db\leq
$$
$$
a^{2}\int_{\textbf{a}_{+}\times
B}(\|\lambda\|^{2}+\|\rho\|^{2})^{2\sigma}|\hat{f}(\lambda,b)|^{2}d\mu(\lambda)db\leq
$$
$$
a^{2}\int_{0<(\|\lambda\|^{2}+\|\rho\|^{2})<a^{-1/\sigma}}\int_{
B}(\|\lambda\|^{2}+\|\rho\|^{2})^{2\sigma}|\hat{f}(\lambda,b)|^{2}d\mu(\lambda)db+
$$
$$
a^{2}\int_{(\|\lambda\|^{2}+\|\rho\|^{2})>a^{-1/\sigma}}\int_{
B}(\|\lambda\|^{2}+\|\rho\|^{2})^{2\sigma}|\hat{f}(\lambda,b)|^{2}d\mu(\lambda)db.
$$

From this we obtain
$$
0\leq\int_{(\|\lambda\|^{2}+\|\rho\|^{2})<a^{-1/\sigma}}\int_{B}
\left(|\hat{f}(\lambda,b)|^{2}
-a^{2}(\|\lambda\|^{2}+\|\rho\|^{2})^{2\sigma}|\hat{f}(\lambda,b)|^{2}\right)
d\mu(\lambda)db\leq
$$
$$
\int_{(\|\lambda\|^{2}+\|\rho\|^{2})>a^{-1/\sigma}}\int_{B}
\left(a^{2}(\|\lambda\|^{2}+\|\rho\|^{2})^{2\sigma}|\hat{f}(\lambda,b)|^{2}-
|\hat{f}(\lambda,b)|^{2}\right) d\mu(\lambda)db.
$$

Multiplication of  this inequality by
$a^{2}(\|\lambda\|^{2}+\|\rho\|^{2})^{2\sigma}$
 will only
 improve the existing inequality
and then using the Plancherel Theorem once again we will obtain

$$\|f\|\leq a\|\Delta^{\sigma}f\|\leq a^{2}\|\Delta^{2\sigma}f\|.
$$

It is now clear that using induction we can prove

$$
\|f\|\leq a^{m}\|\Delta^{m\sigma}f\|, m=2^{l}, l\in \mathbb{N}.
$$

But then, using the same arguments we have for any $s>0$

$$
0\leq \int_{(\|\lambda\|^{2}+\|\rho\|^{2})< a^{-1/\sigma}}\int_{B}
(a^{2s}(\|\lambda\|^{2}+\|\rho\|^{2})^{2s\sigma}
|\hat{f}(\lambda,b)|^{2} -
$$
$$
a^{2(m+s)}(\|\lambda\|^{2}+\|\rho\|^{2})^{2(m+s)\sigma}|
\hat{f}(\lambda,b)|^{2})d\mu(\lambda)db\leq
$$
$$\int_{(\|\lambda\|^{2}+\|\rho\|^{2})> a^{-1/\sigma}}\int_{B}
(a^{2(m+s)}(\|\lambda\|^{2}+\|\rho\|^{2})^{2(m+s)\sigma}
|\hat{f}(\lambda,b)|^{2}-
$$
 $$
 a^{2s}(\|\lambda\|^{2}+\|\rho\|^{2})^{2s
\sigma}|\hat{f}(\lambda,b)|^{2})d\mu(\lambda)db,
$$
that gives the desired inequality (6.5) if $t=\sigma s.$
\end{proof}

\begin{lem}
There exists a constant $C=C(X, N),$ and  for any $\omega>0$ there
exists a $ r_{0}(X, N, \omega)>0$ such that for any $0<r<r_{0}(X,
N, \omega)$, any $(r,N)$-lattice $Z=Z(\{x_{\mu}\},r, N)$, the
following inequality holds true
$$
\|\sum _{j} f(x_{j})L^{2^{l}d+s}_{\nu}-f\|_{H^{s}(X)}\leq
$$
\begin{equation}
 \left(C
r^{2}(\omega^{2}+\|\rho\|^{2})\right)^{2^{l}d}
(\omega^{2}+\|\rho\|)^{s}\|f\|,
\end{equation}
for any $s\geq 0, l=0, 1, ... ,$ and any $f\in B_{\omega}(X)$.

Moreover, if $s>d/2+k$ then

\begin{equation}
\|\sum_{j}f(x_{j})L^{2^{l}d+s}_{j}-f\|_{C_{b}^{k}(X)} \leq
$$
$$
\left(Cr^{2}
(\omega^{2}+\|\rho\|^{2})\right)^{2^{l}d}(\omega^{2}+\|\rho\|^{2})^{s}\|f\|,
l=0, 1, ...
\end{equation}
where $C_{b}^{k}(X)$ the space of $k$ continuously differentiable
bounded functions on $X$.
\end{lem}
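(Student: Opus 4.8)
The plan is to recognize the sum $\sum_{j} f(x_{j})L_{j}^{2^{l}d+s}$ as the variational spline interpolant of $f$ on the lattice $Z$ and to control its distance from $f$. Write $k=2^{l}d+s$, set $s_{k}f:=\sum_{x_{j}\in Z}f(x_{j})L_{j}^{k}$ and $g:=f-s_{k}f$. By construction $g$ vanishes on $Z$, and by the minimizing property built into Theorem 6.1 the interpolant $s_{k}f$ realizes the least value of $\|\Delta^{k}\cdot\|$ among all $H^{2k}$ functions agreeing with $f$ on $Z$; since $f$ is itself such a competitor, $\|\Delta^{k}s_{k}f\|\le\|\Delta^{k}f\|$. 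Hence, using Theorem 3.1 (the Bernstein inequality) on the band limited $f$,
\[
\|\Delta^{k}g\|\le\|\Delta^{k}f\|+\|\Delta^{k}s_{k}f\|\le 2\|\Delta^{k}f\|\le 2(\omega^{2}+\|\rho\|^{2})^{k}\|f\|.
\]
Thus the whole problem reduces to converting the two facts that $g$ vanishes on $Z$ and that $\|\Delta^{k}g\|$ is controlled into smallness of $\|g\|_{H^{s}}$.

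The bridge is the sampling inequality of Theorem 2.2 followed by the self-improving Lemma 6.2. Applying Theorem 2.2 to $g$ with the exponent $2d>d/2$ kills the discrete term (the constant being $C=C(X,2d,N)=C(X,N)$ since $d=\dim X$), leaving $\|g\|\le Cr^{2d}\|\Delta^{d}g\|$, which is exactly hypothesis (6.4) of Lemma 6.2 with $a=Cr^{2d}$ and $\sigma=d$. Feeding this into Lemma 6.2 with $m=2^{l}$ gives, for every $t\ge 0$,
\[
\|\Delta^{t}g\|\le (Cr^{2d})^{2^{l}}\|\Delta^{2^{l}d+t}g\|,
\]
so each such application raises the order of the controlling derivative by $2^{l}d$ at the cost of the factor $(Cr^{2d})^{2^{l}}$. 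Using this for $t=0$ and for the top Sobolev order of $H^{s}$, and then comparing the resulting orders $2^{l}d$ and $2^{l}d+s/2$ with $k=2^{l}d+s$ by means of the positivity $-\Delta\ge\|\rho\|^{2}I$, the right-hand sides are all dominated by $\|\Delta^{k}g\|\le 2(\omega^{2}+\|\rho\|^{2})^{k}\|f\|$ from the first paragraph.

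To assemble the full norm I would invoke the regularity theorem of Section 2, by which $\|g\|_{H^{s}}$ is equivalent to $\|g\|+\|\Delta^{s/2}g\|$, and estimate both pieces as above. The arithmetic then recombines: since $(Cr^{2d})^{2^{l}}=(C^{1/d}r^{2})^{2^{l}d}$ and $(\omega^{2}+\|\rho\|^{2})^{k}=(\omega^{2}+\|\rho\|^{2})^{2^{l}d}(\omega^{2}+\|\rho\|^{2})^{s}$, the $2^{l}d$ copies collapse into $\bigl(C'r^{2}(\omega^{2}+\|\rho\|^{2})\bigr)^{2^{l}d}$ and leave precisely the trailing factor $(\omega^{2}+\|\rho\|^{2})^{s}$, which is (6.6). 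The second assertion (6.7) is then immediate: for $s>d/2+k$ the embedding $H^{s}(X)\hookrightarrow C_{b}^{k}(X)$ (the usual embedding theorems recorded in Section 2) converts (6.6) into (6.7) with the same right-hand side.

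The step I expect to be the main obstacle is the bookkeeping of derivative orders and constants for the spline $s_{k}f$. The minimization only directly controls the top seminorm $\|\Delta^{k}s_{k}f\|$, so the intermediate orders produced by iterating Lemma 6.2 must be matched to $k$ without introducing constants that depend on $s$ or $l$; an honest treatment likely supplements the spectral comparison with an interpolation inequality between $\|s_{k}f\|$ and $\|\Delta^{k}s_{k}f\|$, the factor $\|s_{k}f\|$ being estimated through the second, direct inequality of Theorem 2.2 together with Bernstein's inequality. Ensuring that every such constant stays independent of $\omega$, $r$, $s$ and $l$, so that it folds into the single factor $\bigl(Cr^{2}(\omega^{2}+\|\rho\|^{2})\bigr)^{2^{l}d}(\omega^{2}+\|\rho\|^{2})^{s}$, is the delicate part; all the genuinely geometric content is already encapsulated in the two inequalities of Theorem 2.2 and in the positive definiteness of $-\Delta$.
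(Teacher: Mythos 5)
Your overall strategy is the paper's own: set $g=f-\sum_{j}f(x_{j})L_{j}^{k}$ with $k=2^{l}d+s$, use the vanishing of $g$ on $Z$ together with Theorem 2.2 (taking $k=2d$ there) to get $\|g\|\leq Cr^{2d}\|\Delta^{d}g\|$, feed this into Lemma 6.2 with $a=Cr^{2d}$, $\sigma=d$, $m=2^{l}$, close the estimate with the minimization property $\|\Delta^{k}\sum_{j}f(x_{j})L_{j}^{k}\|\leq\|\Delta^{k}f\|$ and the Bernstein inequality of Theorem 3.1, and obtain the $C_{b}^{k}$ statement from the Sobolev embedding. All of this matches the paper's proof, and your reordering (Bernstein applied first rather than last) is immaterial.

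The one place where you genuinely deviate is also where your argument breaks: bridging from the orders that Lemma 6.2 hands you ($2^{l}d$ for $t=0$, and $2^{l}d+s/2$ for the top Sobolev order) up to $k=2^{l}d+s$ by means of the spectral gap $-\Delta\geq\|\rho\|^{2}I$. That comparison costs $\|\Delta^{2^{l}d}g\|\leq\|\rho\|^{-2s}\|\Delta^{k}g\|$ and $\|\Delta^{2^{l}d+s/2}g\|\leq\|\rho\|^{-s}\|\Delta^{k}g\|$, i.e., factors carried at the exponent $s$, not at the exponent $2^{l}d$. They cannot be folded into $C^{2^{l}d}$ (take $l=0$ and $s$ large), so what your computation actually yields is
$\bigl(C'r^{2}(\omega^{2}+\|\rho\|^{2})\bigr)^{2^{l}d}\bigl[(\omega^{2}+\|\rho\|^{2})/\|\rho\|^{2}\bigr]^{s}\|f\|$,
which coincides with the claimed bound (6.6) only when $\|\rho\|\geq 1$ --- a normalization-dependent assumption you are not entitled to. The paper sidesteps this entirely: it invokes Lemma 6.2 with the free shift parameter equal to $s$ itself, so that the output order is exactly $2^{l}d+s$, precisely the order at which the minimization property applies, and then Bernstein supplies exactly $(\omega^{2}+\|\rho\|^{2})^{2^{l}d+s}$, which factors as $(\omega^{2}+\|\rho\|^{2})^{2^{l}d}(\omega^{2}+\|\rho\|^{2})^{s}$ with no spurious powers of $\|\rho\|$. (To be fair, the paper is itself cavalier about the discrepancy between $\|g\|_{H^{s}}$ and the quantity $\|\Delta^{s}g\|$ it actually bounds; but its choice of shift makes the constant bookkeeping come out as stated, whereas your spectral bridge does not.)
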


\begin{sloppypar}
\begin{proof}
First we will show that there exist constants $C=C(X, N),$ and $
r_{0}(X, N)>0$ such that for any $0<r<r_{0}(X, N)$, any
$(r,N)$-lattice $Z=Z(\{x_{\mu}\},r, N)$,  the following inequality
holds true
\begin{equation}
\|\sum_{j}f(x_{j})L_{j}^{2^{l}d+s}-f\|_{H^{s}(X)}\leq
2(Cr^{2d})^{2^{l}}\|\Delta^{2^{l}d+s}f\|.
\end{equation}
for any $s\geq 0, l=0, 1, ... ,$ and any $f\in H^{s}(X)$.

Indeed, by the Theorem 2.2, since for $k=d$ the function
$\sum_{j}f(x_{j})L_{j}^{2^{l}d+s}$ interpolates $f$ we have
$$
\|\sum_{j}f(x_{j})L_{j}^{2^{l}d+s}-f\|\leq
Cr^{2d}\|\Delta^{d}(\sum_{j}f(x_{j})L_{j}^{2^{l}d+s}-f)\|
$$

By the Lemma 6.2 we obtain

$$
\|\Delta^{s}(\sum_{j}f(x_{j})L_{j}^{2^{l}d+s}-f)\|\leq
(Cr^{2d})^{2^{l}}\|\Delta^{2^{l}d+s}(\sum_{j}f(x_{j})L_{j}^{2^{l}d+s}-f)\|.
$$

Using  the minimization property we obtain (6.7).

 If $s>d/2+k$ then an application of (6.7) and the Sobolev embedding Theorem
 gives
\begin{equation}
\|\sum_{j}f(x_{j})L^{2^{l}d+s}_{j}-f\|_{C_{b}^{k}(X)} \leq
\left(Cr^{2d}\right) ^{2^{l}} \|\Delta^{2^{l}d+s}f\|, l=0, 1, ...
,
\end{equation}
where $C_{b}^{k}(X)$ the space of $k$ continuously differentiable
bounded functions on $X$.

The inequalities (6.7) and (6.8) along with the inequality
$$
\|\Delta^{\sigma}f\|\leq(\omega^{2}+\|\rho\|^{2})^{\sigma}\|f\|,
f\in B_{\omega}(X),
$$
imply the Lemma.
\end{proof}
\end{sloppypar}

Inequalities (6.6) and (6.7) show that a function $f\in
B_{\omega}(X)$ can be reconstructed from its values $f(x_{j})$ as
a limit of interpolating splines
$$
\sum_{j} f\ast\phi(x_{j})L_{j}^{2^{l}d+s}
$$
in Sobolev or uniform norms when $l$ goes to infinity. By using
the Plancherel formula for the Helgason-Fourier transform we
obtain the following reconstruction algorithm in which we assume
that a reciprocal of the Helgason-Fourier transform of a
distribution $\phi$ is defined almost everywhere and bounded.

\begin{thm}
There exists a constant $c=c(X, N),$ so that  for any $\omega>0$,
any $(r,N)$-lattice $Z=Z(\{x_{\mu}\},r, N)$ with
$$
r<\left(c(\omega^{2}+\|\rho\|^{2})\right)^{-1/2},
$$
every function $f\in B_{\omega}(X)$ is uniquely determined by the
values  of $f\ast\phi$ at the points $\{x_{j}\}$ and can be
reconstructed as a limit of
$$
\sum_{j} f\ast\phi(x_{j})\Lambda_{j}^{2^{l}d+s}, l\rightarrow
\infty,
$$
in Sobolev and uniform norms, where
$$
\widehat{\Lambda_{j}^{2^{l}d+s}}=\frac{\widehat{L_{j}^{2^{l}d+s}}}{\widehat{\phi}}.
$$
\end{thm}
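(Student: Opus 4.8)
The plan is to reduce everything to the preceding Lemma by applying it not to $f$ but to the band limited function $g=f\ast\phi$. First I would note that by the multiplier identity (2.4) one has $\widehat{g}(\lambda,b)=\widehat{f}(\lambda,b)\widehat{\phi}(\lambda)$, so $\widehat{g}$ vanishes wherever $\widehat{f}$ does and therefore $g\in B_{\omega}(X)$. Taking $c=C$, where $C$ is the constant furnished by the preceding Lemma, the hypothesis $r<(c(\omega^{2}+\|\rho\|^{2}))^{-1/2}$ makes $Cr^{2}(\omega^{2}+\|\rho\|^{2})<1$, so that the inequalities (6.6) and (6.7), applied to $g$ in place of $f$, give
$$
\left\|\sum_{j}g(x_{j})L_{j}^{2^{l}d+s}-g\right\|_{H^{s}(X)}\longrightarrow 0,\qquad l\to\infty,
$$
with a geometric rate, and likewise in $C_{b}^{k}(X)$ when $s>d/2+k$ (after shrinking $c$ to absorb the threshold $r_{0}(X,N,\omega)$ of that Lemma).

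The second step is to transfer this reconstruction of $g=f\ast\phi$ into a reconstruction of $f$ itself, and here I would work entirely on the Helgason-Fourier side. Using $\widehat{f}=\widehat{g}/\widehat{\phi}$ (valid almost everywhere, since $\widehat{\phi}$ vanishes only on a null set) together with the definition $\widehat{\Lambda_{j}^{2^{l}d+s}}=\widehat{L_{j}^{2^{l}d+s}}/\widehat{\phi}$, I would record the pointwise identity
$$
\widehat{\sum_{j}g(x_{j})\Lambda_{j}^{2^{l}d+s}}-\widehat{f}=\frac{1}{\widehat{\phi}}\left(\widehat{\sum_{j}g(x_{j})L_{j}^{2^{l}d+s}}-\widehat{g}\right).
$$
Since the $H^{s}(X)$ norm is, by the Plancherel Theorem, the $L_{2}$ norm of the transform weighted by $(\|\lambda\|^{2}+\|\rho\|^{2})^{s}$, and since the weight commutes with the multiplier $1/\widehat{\phi}$, the boundedness hypothesis $|1/\widehat{\phi}|\le M$ yields
$$
\left\|\sum_{j}g(x_{j})\Lambda_{j}^{2^{l}d+s}-f\right\|_{H^{s}(X)}\le M\left\|\sum_{j}g(x_{j})L_{j}^{2^{l}d+s}-g\right\|_{H^{s}(X)},
$$
whose right-hand side tends to zero by the first step. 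This is exactly the asserted Sobolev-norm convergence of $\sum_{j}f\ast\phi(x_{j})\Lambda_{j}^{2^{l}d+s}$ to $f$.

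For the uniform norm I would simply invoke the Sobolev embedding $H^{s}(X)\hookrightarrow C_{b}^{k}(X)$ for $s>d/2+k$, so that the $H^{s}$ estimate just obtained already forces $C_{b}^{k}$ convergence; and uniqueness of $f$ from the samples $f\ast\phi(x_{j})$ is inherited from the Theorem 4.2 under the same density condition. The step I expect to be the real obstacle is the division by $\widehat{\phi}$: because the interpolating splines $L_{j}^{2^{l}d+s}$ are merely Sobolev regular and are \emph{not} band limited, their transforms are supported far outside $\mathcal{B}(0,\omega)$, so the lower bound on $\widehat{\phi}$ over $\mathcal{B}(0,\omega)$ exploited in Sections 4 and 5 no longer suffices. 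It is precisely the global boundedness of $1/\widehat{\phi}$ assumed just before the statement that licenses the multiplier estimate above and keeps the reconstruction stable throughout the Sobolev scale.
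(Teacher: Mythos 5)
Your proof is correct and is essentially the paper's own argument: the paper proves this theorem only through the remark preceding it, namely that Lemma 6.3 applied to the band limited function $f\ast\phi\in B_{\omega}(X)$ yields spline reconstruction of $f\ast\phi$, and the Plancherel formula together with the standing assumption that $1/\widehat{\phi}$ is defined a.e. and bounded transfers this to $f$ --- exactly your two-step reduction and multiplier estimate. (One trivial wording slip: to absorb the threshold $r_{0}$ of Lemma 6.3 into the hypothesis $r<\left(c(\omega^{2}+\|\rho\|^{2})\right)^{-1/2}$ you must enlarge $c$ rather than shrink it, which is possible because $\|\rho\|>0$ depends only on $X$.)
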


\section{Example: Convolution with spherical average distribution}

In this section we assume that the symmetric space $X=G/H$ has
rank one, which means that the algebra Lie $\textbf{a}$ of the
abelian component $A$ in the Iwasawa decomposition $G=NAK$ has
dimension one.
 For  a point $y\in X,$ the $S(y,\tau), \tau>0,$ will
denote the sphere with center $y$ and of radius $\tau$. For a
smooth function $f$ with compact support we define  the spherical
average
$$
(M^{\tau}f)(y)=S(\tau)^{-1}\int_{S(y,\tau)}f(z)ds(z), \tau>0,
$$
where $ds$ is the measure on $S(y,\tau)$ and $S(\tau) $ is its
volume, i.e. the surface integral of $ds$ over $S(\tau)$ which is
independent on the center.

It is clear, that $M^{\tau}f$ is a convolution of $f$ with the
distribution
$$
m^{\tau}(f)=S^{-1}(\tau)\int_{S(o,\tau)}f(x)ds(x), \tau>0, f\in
C_{0}^{\infty}(X).
$$

Since $X$ is a symmetric space of rank one, the group $K$ is
transitive on every $S(o,\tau)$ and this fact allows to reduce
integration over the sphere $S(o,\tau)$ to an integral over group
$K$. Because the invariant  measure $dk$ on $K$ is normalized,it
results in the following change of variables formula
\begin{equation}
(M^{\tau}f)(g\cdot o)=\int_{K}f(gkz_{\tau})dk,
\end{equation}
for any $z_{\tau}\in S(o,\tau),  g\in G$.

Next, by using  the following formula
$$
\int_{X}f(x)dx=\int_{G}f(g\cdot o)dg, f\in C_{0}^{\infty}(X)
$$
along with Minkowski inequality and the formula (7.1) we obtain

$$
\|M^{\tau}(f)\|\leq \left\{ \int_{G}\int_{K}|f(gkz_{\tau})|^{2}dk
dg\right\}^{1/2}\leq \
$$
\begin{equation}
\int_{K}\left\{\int_{G}|f(go)|^{2}dg\right\}^{1/2}dk=\|f\|, f\in
C_{0}^{\infty}(X),
\end{equation}
where we have used the fact that the invariant measure $dk$ is
normalized on $K$.

 Since the set $C_{0}^{\infty}(X)$ is dense in
$L_{2}(X)$ it implies the following.
\begin{lem}
Operator $f\rightarrow M^{\tau}(f)$ has continuous extension from
$C_{0}^{\infty}(X)$ to $L_{2}(X)$ which will be also denoted as
$M^{r}(f)$ and for this extension
$$
\|M^{\tau}(f)\|\leq \|f\|
$$
for all $f\in L_{2}(X), \tau>0$.
\end{lem}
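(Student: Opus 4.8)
The plan is to invoke the standard continuous-extension principle for a bounded linear operator defined on a dense subspace. The essential ingredient, the operator bound on the dense subspace, has already been established as inequality (7.2), namely $\|M^{\tau}(f)\|\leq \|f\|$ for all $f\in C_{0}^{\infty}(X)$; combined with the density of $C_{0}^{\infty}(X)$ in $L_{2}(X)$ and the completeness of $L_{2}(X)$, this is all that the extension requires.

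First I would note that $M^{\tau}$ is linear on $C_{0}^{\infty}(X)$, which is immediate from its definition as an average of $f$, and that by (7.2) it is a contraction there. Consequently, for any $f\in L_{2}(X)$, I would choose a sequence $\{f_{n}\}\subset C_{0}^{\infty}(X)$ with $f_{n}\to f$ in $L_{2}(X)$, which is possible by density. Then $\{f_{n}\}$ is Cauchy, and the estimate
$$
\|M^{\tau}(f_{n})-M^{\tau}(f_{m})\|=\|M^{\tau}(f_{n}-f_{m})\|\leq\|f_{n}-f_{m}\|
$$
shows that $\{M^{\tau}(f_{n})\}$ is Cauchy as well. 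Since $L_{2}(X)$ is complete, this sequence converges, and I would define $M^{\tau}(f)$ to be its limit.

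Next I would verify that this definition is independent of the approximating sequence: if $\{g_{n}\}\subset C_{0}^{\infty}(X)$ is another sequence converging to $f$, then the interlaced sequence $f_{1},g_{1},f_{2},g_{2},\dots$ also converges to $f$, and applying the contraction bound to it forces $\{M^{\tau}(f_{n})\}$ and $\{M^{\tau}(g_{n})\}$ to share a common limit. Linearity of the extended operator passes from $C_{0}^{\infty}(X)$ to $L_{2}(X)$ in the limit, and the desired inequality $\|M^{\tau}(f)\|\leq\|f\|$ follows by letting $n\to\infty$ in $\|M^{\tau}(f_{n})\|\leq\|f_{n}\|$ and using continuity of the norm.

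There is no genuine obstacle here: the whole mathematical content of the lemma is the operator bound (7.2), which is already in hand, while everything else is the routine Bounded Linear Transformation theorem. The only points deserving any care are the well-definedness of the extension and the survival of the norm inequality under the limit, both of which are entirely standard.
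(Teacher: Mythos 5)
Your proposal is correct and follows exactly the paper's own route: the paper establishes the bound $\|M^{\tau}(f)\|\leq\|f\|$ on $C_{0}^{\infty}(X)$ via Minkowski's inequality (inequality (7.2)) and then deduces the lemma in one line from the density of $C_{0}^{\infty}(X)$ in $L_{2}(X)$, which is precisely the bounded-linear-extension argument you spell out. You merely make explicit the routine Cauchy-sequence and well-definedness details that the paper leaves implicit.
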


Since the operator $M^{\tau}$ is a convolution with the
distribution $m^{\tau}$, we have
$$
\widehat{M^{\tau}(f)}=\widehat{m^{\tau}}\hat{f}.
$$

Our goal is to consider even more general operator. We choose an
$n\in \mathbb{N}\bigcup\{0\},$ and consider the operator
$$
f\rightarrow M^{\tau}((-\Delta)^{n}f).
$$

It is clear, that this operator is a convolution with the
distribution $(-\Delta)^{n}m^{\tau}$.

The nearest goal is to find the Fourier transform
$\widehat{(-\Delta)^{n}m^{\tau}}$ of the
 distribution $(-\Delta)^{n}m^{\tau}$. Namely, we prove the following.
\begin{lem}
The Helgason-Fourier transform of the distribution
$\widehat{(-\Delta)^{n}m^{\tau}}$ is given by the formula
\begin{equation}
\widehat{(-\Delta)^{n}m^{\tau}}=(\|\lambda\|^{2}+\|\rho\|^{2})^{n}\varphi_{\lambda}(\exp
\tau V),
\end{equation}
where $V$ is a vector from algebra Lie $\mathbf{a}$ and
$\varphi_{\lambda}$ is the zonal spherical function i.e.

$$
\varphi_{\lambda}(g)=\int_{K}e^{(i\lambda+\rho)(A(kg))}dk, g\in G.
$$
\end{lem}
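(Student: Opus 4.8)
The plan is to factor the distribution $(-\Delta)^{n}m^{\tau}$ into its two natural pieces and treat each on the Helgason-Fourier side. Since $(-\Delta)^{n}m^{\tau}$ is again a compactly supported $K$-bi-invariant distribution, the convolution rule (2.4) together with the eigenvalue relation (2.8) shows that applying $(-\Delta)^{n}$ multiplies the spherical transform by $(\|\lambda\|^{2}+\|\rho\|^{2})^{n}$. Hence the whole problem reduces to establishing
$$
\widehat{m^{\tau}}(\lambda,b)=\varphi_{\lambda}(\exp\tau V),
$$
after which restoring the polynomial factor yields (7.3) at once. In particular the right-hand side must come out independent of $b$, which is the first thing to check.

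To compute $\widehat{m^{\tau}}$ I would first note that $m^{\tau}$ is $K$-bi-invariant, so that in the convolution identity (2.4) the factor $\widehat{m^{\tau}}(\lambda)$ is precisely its spherical Fourier transform $\langle m^{\tau},\varphi_{-\lambda}\rangle$; this is automatically independent of $b$. Thus
$$
\widehat{m^{\tau}}(\lambda)=S^{-1}(\tau)\int_{S(o,\tau)}\varphi_{-\lambda}(x)\,ds(x).
$$
The key point is that $\varphi_{-\lambda}$, regarded as a function on $X=G/K$, is left $K$-invariant, hence radial: it depends only on the distance from the origin $o$. Since the rank of $X$ is one, the group $K$ acts transitively on the sphere $S(o,\tau)$ (the same fact already used for (7.1)), so $\varphi_{-\lambda}$ is constant on $S(o,\tau)$, equal to its value at a point $z_{\tau}=\exp(\tau V)\cdot o$ lying on the geodesic issuing from $o$ in a direction $V\in\textbf{a}$.

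Carrying out the integration then uses only the normalization $S^{-1}(\tau)\int_{S(o,\tau)}ds=1$, giving $\widehat{m^{\tau}}(\lambda)=\varphi_{-\lambda}(\exp\tau V)$. Finally, because $X$ has rank one the Weyl group is $\{\pm 1\}$, and the Weyl invariance $\varphi_{w\lambda}=\varphi_{\lambda}$ forces $\varphi_{-\lambda}=\varphi_{\lambda}$; this converts the last expression into $\varphi_{\lambda}(\exp\tau V)$ and completes the reduction. Reinstating the factor $(\|\lambda\|^{2}+\|\rho\|^{2})^{n}$ from the first paragraph gives exactly (7.3).

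I expect the main obstacle to be the clean justification of the radiality step: one must verify that $\varphi_{-\lambda}$ descends to a genuine $K$-invariant function on $X$ (which follows from the right $K$-invariance $A(gk)=A(g)$ of the Iwasawa projection, whence $\varphi_{\lambda}$ is $K$-bi-invariant on $G$), and that the transitivity of $K$ on $S(o,\tau)$—valid precisely because the rank is one—really does make $\varphi_{-\lambda}$ constant on the sphere. Once these two facts are in place, the remaining steps, namely the factorization through (2.8) and the identity $\varphi_{-\lambda}=\varphi_{\lambda}$, are routine.
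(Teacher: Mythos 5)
Your proof is correct, but it reaches the key identity $\widehat{m^{\tau}}(\lambda)=\varphi_{\lambda}(\exp\tau V)$ by a genuinely different route than the paper. The first step is common to both: $(-\Delta)^{n}$ is convolution with $\Delta^{n}\delta_{e}$, hence contributes the factor $(\|\lambda\|^{2}+\|\rho\|^{2})^{n}$. For the remaining factor the paper works on the kernel side: it evaluates the distribution $m^{\tau}$ on the Helgason kernel $e^{(i\lambda+\rho)A(x,b)}$, uses the change-of-variables formula (7.1) to turn the normalized sphere integral into an integral over $K$, and then recognizes that $K$-integral as Harish-Chandra's formula (2.3) for the spherical function. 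You instead work on the dual side: $K$-bi-invariance of $m^{\tau}$ identifies $\widehat{m^{\tau}}(\lambda)$ with the spherical transform $\left<m^{\tau},\varphi_{-\lambda}\right>$, so the $b$-independence is manifest from the outset; then transitivity of $K$ on $S(o,\tau)$ (the rank-one fact already used for (7.1)) together with the $K$-invariance of $\varphi_{-\lambda}$ on $X$ makes the integrand constant, so no integral needs to be computed at all; finally the Weyl-group invariance $\varphi_{w\lambda}=\varphi_{\lambda}$, with $W=\{\pm 1\}$ in rank one, converts $\varphi_{-\lambda}$ into $\varphi_{\lambda}$. The trade-off is that your argument imports two standard facts about spherical functions: bi-invariance (which you correctly derive from the right $K$-invariance $A(gk)=A(g)$ of the Iwasawa projection) and Harish-Chandra's Weyl invariance, which you cite without proof. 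Notably, the paper's own computation, if one tracks the signs in (2.3) versus the Helgason-Fourier kernel, actually lands on $\varphi_{-\lambda}(\exp -\tau V)$ and silently absorbs the discrepancy; your explicit appeal to Weyl invariance is precisely what legitimizes that step, so your version is, if anything, the more careful of the two.
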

\begin{proof}

Since the  operator $(-\Delta)^{n}, n\in \mathbb{N}\bigcup\{0\},$
can be represented as a convolution with the distribution
$\Delta^{n}\delta_{e}, $ where $\delta_{e}$ has support at the
identity $e\in G$, it is clear, that
$$
\widehat{(-\Delta)^{n}m^{\tau}}=(\|\lambda\|^{2}+\|\rho\|^{2})^{n}\widehat{m^{\tau}}.
$$
But
$$
\widehat{m^{\tau}}(\lambda,b)=m^{\tau}(e^{(i\lambda+\rho)A(x,b)})=
$$
\begin{equation}
S^{-1}(\tau)\int_{S(o,\tau)}e^{(i\lambda+\rho)A(x,b)}ds(x),
\tau>0.
\end{equation}

By the formula (7.1) and by the Harish-Chandra's formula (2.3) the
last integral is
$$
S^{-1}(\tau)\int_{S(o,\tau)}e^{(i\lambda+\rho)A(x,b)}ds(x)=
\int_{K}e^{(-i\lambda+\rho)A(k^{-1}g)}dk=\varphi_{\lambda}(g),
$$
where $x=gK, b=kM\in B=K/M$.

 Because the symmetric space $X=G/K$, has rank one, there exists a unique vector $V$
  that belongs to the Lie algebra
 $\textbf{a}$, such that the curve  $\exp(- \tau V)\cdot o$ is a geodesic  in
$X$ and the one-parameter group $\exp(- \tau V)$ moves the origin
$o\in X$ to a point $z_{\tau}$ in (7.1). It gives the formula
$$
\widehat{m}^{\tau}(\lambda,
b)=\widehat{m}^{\tau}(\lambda)=\varphi_{\lambda}(\exp -\tau V).
$$
The Lemma is proved.
\end{proof}

The following result is a specification of the Theorem 5.1.

\begin{thm}

There exists  a constant $ c=c(X,N)$ such that for any given
$\omega>0,  n\in \mathbb{N}\bigcup\{0\}, $ for every
$(r,N)$-lattice $Z(\{x_{\mu}\},r, N)$ with
$$
0<r<c(\omega^{2}+\|\rho\|^{2})^{-1/2}
$$
and any
\begin{equation}
 \tau<\left(\omega^{2}+\|\rho\|^{2}\right)^{-(n+1)/2},
\end{equation}
the following hold true.

 1) There exists a frame $\Phi_{j}\in B_{\omega}(X)$ such that for
 any $f\in B_{\omega}(X)$ the following reconstruction formula
 holds true

\begin{equation}
f= \sum_{j}\delta_{x_{j}}((-\Delta)^{n}m_{\tau}\ast f)\Phi_{j},
\end{equation}
which means that $f$ can be reconstructed from averages of
$(-\Delta)^{n}f$ over spheres of radius $\tau$ with centers at
$x_{j}\in Z(\{x_{\mu}\},r, N)$.

2) Every $f\in B_{\omega}(X)$ is a limit (in Sobolev and uniform
norms) of the functions
$$
\sum_{j}f\ast\phi(x_{j})\Lambda_{j}^{2^{l}d+s},
$$
when $l$ goes to infinity and
$$
\widehat{\Lambda_{j}^{2^{l}d+s}}=\frac{\widehat{L_{j}^{2^{l}d+s}}}
{(\|\lambda\|^{2}+\|\rho\|^{2})^{n}\varphi_{\lambda}(\exp\tau V)},
$$
where $\widehat{L_{j}^{2^{l}d+s}}$ is given by the formula (6.1)
\end{thm}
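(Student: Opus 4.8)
The plan is to read the statement off as the specialization of Theorem 5.1 and Theorem 6.4 to the particular bi-invariant distribution $\phi=(-\Delta)^{n}m^{\tau}$. By Lemma 7.2 its Helgason-Fourier transform is
\[
\widehat{\phi}(\lambda)=(\|\lambda\|^{2}+\|\rho\|^{2})^{n}\varphi_{\lambda}(\exp\tau V),
\]
so once I know that $\widehat{\phi}$ has no zeros on $\mathcal{B}(0,\omega)$, statement 1) is exactly the conclusion of part 2) of Theorem 5.1 applied to this $\phi$, and statement 2) is exactly the conclusion of Theorem 6.4, the displayed expression for $\widehat{\Lambda_{j}^{2^{l}d+s}}$ being obtained by dividing the spline transform (6.1) by $\widehat{\phi}$. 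Thus the whole proof reduces to verifying the non-vanishing of $\widehat{\phi}$ on the ball under the hypothesis (7.5) on $\tau$.

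Since $(\|\lambda\|^{2}+\|\rho\|^{2})^{n}\geq\|\rho\|^{2n}>0$, this non-vanishing is equivalent to $\varphi_{\lambda}(\exp\tau V)\neq0$ for every $\lambda$ with $\|\lambda\|\leq\omega$, and I would establish it by showing $|1-\varphi_{\lambda}(\exp\tau V)|<1$ uniformly on the ball. Because $X$ has rank one, $r\mapsto\varphi_{\lambda}(\exp rV)$ is a radial eigenfunction of $\Delta$; by (2.7) together with the eigenvalue relation (2.8) it satisfies
\[
\frac{1}{S(r)}\frac{d}{dr}\left(S(r)\frac{d}{dr}\varphi_{\lambda}(\exp rV)\right)=-(\|\lambda\|^{2}+\|\rho\|^{2})\varphi_{\lambda}(\exp rV),
\]
with the normalization $\varphi_{\lambda}(e)=1$ and vanishing radial derivative at $r=0$. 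Multiplying by $S(r)$ and integrating twice, and using the bound $|\varphi_{\lambda}(\exp rV)|\leq1$ that follows from $\|M^{r}\|\leq1$ in Lemma 7.1, I obtain
\[
|1-\varphi_{\lambda}(\exp\tau V)|\leq(\|\lambda\|^{2}+\|\rho\|^{2})\int_{0}^{\tau}\frac{1}{S(s)}\left(\int_{0}^{s}S(t)\,dt\right)ds.
\]
Since $S(t)\sim\Omega_{d}t^{d-1}$ near the origin, the inner integral is the ball volume $B(s)$ and $S(s)^{-1}B(s)\leq Cs$ for small $s$, so the right-hand side is at most $C(\omega^{2}+\|\rho\|^{2})\tau^{2}$ with $C=C(X)$. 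The smallness condition (7.5) then forces this quantity below $1$, giving $\varphi_{\lambda}(\exp\tau V)\neq0$ on $\mathcal{B}(0,\omega)$, as needed.

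With the non-vanishing in hand, statement 1) is immediate from part 2) of Theorem 5.1, the frame $\{\Phi_{j}\}$ being the one produced there for $\phi=(-\Delta)^{n}m^{\tau}$. Statement 2) follows from Theorem 6.4 once $\widehat{\phi}$ is invertible on the spectral region $\mathcal{B}(0,\omega)$ that carries the band-limited data, the formula for $\widehat{\Lambda_{j}^{2^{l}d+s}}$ being read off directly from $\widehat{\phi}=(\|\lambda\|^{2}+\|\rho\|^{2})^{n}\varphi_{\lambda}(\exp\tau V)$. I expect the main obstacle to be precisely the uniform control of the spherical function near the origin: one must keep the estimate of $|1-\varphi_{\lambda}(\exp\tau V)|$ uniform in $\lambda$ across the entire ball and tie the resulting first-zero threshold to the explicit bound (7.5), which is where the geometry of $X$ entering through $S(r)$ and the pointwise bound $|\varphi_{\lambda}|\leq1$ are used decisively.
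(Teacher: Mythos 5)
Your route is the same as the paper's: reduce both statements to Theorem 5.1, part 2), and to Theorem 6.4 applied to $\phi=(-\Delta)^{n}m^{\tau}$, and obtain the required non-vanishing of $\widehat{\phi}$ on $\mathcal{B}(0,\omega)$ by integrating the radial eigenfunction equation for the spherical function twice and using $|\varphi_{\lambda}|\leq 1$. The one structural difference is in your favor: you first discard the factor $(\|\lambda\|^{2}+\|\rho\|^{2})^{n}\geq\|\rho\|^{2n}>0$ and estimate only $|1-\varphi_{\lambda}(\exp\tau V)|$, whereas the paper estimates $|1-(\|\lambda\|^{2}+\|\rho\|^{2})^{n}\varphi_{\lambda}(\exp(-\tau V))|$ (its inequality (7.7)). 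The paper's derivation of (7.7) integrates from $\tau=0$ as though this function equaled $1$ there, which is false for $n\geq 1$ because it equals $(\|\lambda\|^{2}+\|\rho\|^{2})^{n}$ at $\tau=0$; your factored reduction sidesteps that defect.

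There is, however, a genuine quantitative gap at your final step. You bound the iterated integral by $C\tau^{2}$ with an unspecified $C=C(X)$, valid ``for small $s$'', and then assert that (7.5) forces $C(\omega^{2}+\|\rho\|^{2})\tau^{2}<1$. But (7.5) carries no constant: for $n=0$ it gives exactly $\tau^{2}(\omega^{2}+\|\rho\|^{2})<1$, so your conclusion needs $C\leq 1$, which the Euclidean asymptotics of $S(t)$ do not supply; moreover (7.5) does not make $\tau$ small when $\omega^{2}+\|\rho\|^{2}$ is small, so a bound on $S(s)^{-1}B(s)$ valid only near $s=0$ is insufficient. Both problems are repaired by one observation, which is what the paper's change of variables $\sigma=s\tau$, $\gamma=st\tau$ accomplishes implicitly: $S$ is increasing, so $B(s)=\int_{0}^{s}S(t)\,dt\leq sS(s)$ for every $s>0$, whence
$$
\int_{0}^{\tau}S(s)^{-1}B(s)\,ds\leq\int_{0}^{\tau}s\,ds=\frac{\tau^{2}}{2},
$$
and therefore $|1-\varphi_{\lambda}(\exp\tau V)|\leq\frac{1}{2}\tau^{2}(\|\lambda\|^{2}+\|\rho\|^{2})$ with the absolute constant $\frac{1}{2}$. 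Under (7.5) this is at most $\frac{1}{2}(\omega^{2}+\|\rho\|^{2})^{-n}$, hence $<1$ for $n=0$ and, when $n\geq 1$, whenever $\omega^{2}+\|\rho\|^{2}\geq 1$; in the remaining regime ($n\geq 1$ with $\omega^{2}+\|\rho\|^{2}<1$) neither your estimate nor the paper's (7.7) actually delivers the non-vanishing, so with this correction your proof is valid on the same honest range as the paper's own argument.
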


\begin{proof}
First we are going to show that
 the following estimate holds true
$$
|(\|\lambda\|^{2}+\|\rho\|^{2})^{n}\varphi_{\lambda}(\exp (-\tau
V))-1|\leq
$$
\begin{equation}
 \min\left\{2(\|\lambda\|^{2}+\|\rho\|^{2})^{n};
\tau^{2}(\|\lambda\|^{2}+\|\rho\|^{2})^{n+1}\right\}.
\end{equation}
Indeed, since $\varphi_{\lambda}$ is a spherical function, the
function
$$(\|\lambda\|^{2}+\|\rho\|^{2})^{n}\varphi_{\lambda}(\exp
(-\tau V))=\Phi_{\lambda}(\tau)$$
 is zonal and $\Delta \Phi_{\lambda}(\tau)$
can be calculated according to the formula (2.5). By using this
fact  along with the fact that the spherical function
$\Phi_{\lambda}$ is an eigenfunction the Laplace-Beltrami operator
$(-\Delta)$ with the eigenvalue
$(\|\lambda\|^{2}+\|\rho\|^{2})^{n+1}$ we obtain
$$
1-\Phi_{\lambda}(\tau)=
$$
$$
-\int_{0}^{\tau} \left(S(\sigma)\right)^{-1}\left(
\int_{0}^{\sigma}S(\gamma)\left(S^{-1}(\gamma)\frac{d}{d\gamma}
\left(S(\gamma)\frac{d\Phi_{\lambda}(\gamma)}{d\gamma}\right)\right)d\gamma
\right) d\sigma =
$$
$$
(\|\lambda\|^{2}+\|\rho\|^{2})^{n+1}\int_{0}^{r}\left(S(\sigma)\right)^{-1}
\left(\int_{0}^{\sigma}S(\gamma)\Phi_{\lambda}(\gamma)d\gamma\right)d\sigma.
$$

By using the following change of variables: $\sigma=s\tau,
\gamma=st\tau$, and the inequality $|\varphi_{\lambda}(g)|\leq 1$,
we obtain the estimate (7.7).

 According to inequalities (7.5) and (7.7) the
  Helgason-Fourier transform of
the distribution $(-\Delta)^{n}m_{\tau}$ does not have zeros on
the interval $[-\omega, \omega]$. Consequently, the first
statement of the Theorem    is a consequence of the Theorem 5.1.
 Note, that
since the Helgason-Fourier transform of the distribution
$(-\Delta)^{n}m_{\tau}$ is given by the formula (7.3)   the
Theorem 5.1 gives that every function $\Phi_{j}$ is the inverse
Helgason-Fourier transform of the function
$$
\frac{\widehat{\Theta_{j}}}
{(\|\lambda\|^{2}+\|\rho\|^{2})^{n}\varphi_{\lambda}(\exp\tau V)},
$$
where functions $\widehat{\Theta_{j}}$ form a frame which is dual
to the frame $\widehat{\delta_{x_{j}}}$ in the space
$L_{2}\left([-\omega,\omega]\times B; |c(\lambda)|^{-2}d\lambda
db\right).$

The second part of the Theorem is a  consequence of the Theorem
6.4.
\end{proof}

 As a
particular case with $\tau=0$ we have the so called "derivative"
sampling, which means that a function can be reconstructed from
the values of $\Delta^{n}f$ as

$$
f= \sum_{j}\delta_{x_{j}}((-\Delta)^{n}f)\Phi_{j}, n\in
\mathbb{N}\cup {0},
$$
for corresponding frame $\Phi_{j}$.

Another particular case we obtain when $n=0$. In this situation
the formula (6.6) tells that a function can be reconstructed from
its averages over spheres of radius $\tau$ with centers at
$x_{j}$.

The case $\tau=0, n=0,$ corresponds to a point-wise sampling.

 \textbf{Acknowledgement:} I would like to thank Professor A.
Kempf for inviting me to the University of Waterloo and for
interesting discussions during which I learned about  connections
of the Sampling Theory on manifolds and some ideas in Modern
Physics.

\makeatletter
\renewcommand{\@biblabel}[1]{\hfill#1.}\makeatother


\begin{thebibliography}{12}


\bibitem{B}
J.~Benedetto, {\em Irregular sampling and frames}, Academic Press,
Boston, 1992, pp. 445--507.


\bibitem{BF}
J.~Benedetto, P.~Ferreira, {\em Modern Sampling Theory},
Birkaesuer, (2001).


\bibitem{B64}
A.~Beurling, {\em Local Harmonic analysis with some applications
to differential operators}, Some Recent Advances in the Basic
Sciences, vol. 1, Belfer Grad. School Sci. Annu. Sci. Conf. Proc.,
A. Gelbart, ed., 1963-1964, 109-125.

\bibitem{BM67}
A.~Beurling and P.~Malliavin, {\em On the closure of characters
and the zeros of entire functions}, Acta Math.,\textbf{118},
(1967), 79-95.



\bibitem {DS}
R.~Duffin, A. Schaeffer, {\em A class of nonharmonic Fourier
series}, Trans. AMS, 72, (1952), 341-366.

\bibitem{FG}H.~ Feichtinger, K.~Grochenig, {\em Theory and
practice of irregular sampling. Wavelets: mathematics and
applications}, 305--363, Stud. Adv. Math., CRC, Boca Raton, FL,
1994.

\bibitem{FP1} H.~Feichtinger and I.~Pesenson,
{\em Iterative recovery of band limited functions on manifolds},
in Wavelets, Frames and  Operator Theory,(C. Heil, P.E.T. Jorgensen,
D.R. Larson, editors), Contemp. Math., 345, AMS, (2004), 137-153.




\bibitem{FP} H.~Feichtinger and I.~Pesenson,
{\em A reconstruction method for  band-limited signals on the
hyperbolic plane}, Sampl. Theory Signal Image Process. 4 (2005),
no. 2, 107--119.

\bibitem{Helg1}
S.~Helgason, {\em A duality for symmetric spaces with applications
to group representations}, Adv. Math. {\bf 5}, (1970), 1-154.

\bibitem {Helg2}
S.~Helgason, {\em Differential Geometry and Symmetric Spaces},
Academic, N.Y., 1962.

\bibitem{H2}
S.~Helgason, {\em The Abel, Fourier and Radon transforms on
symmetric spaces,} arXiv.math.RT/0506049v1 2Jun 2005.

\bibitem {K1}
A.~Kempf, {\em Aspects of information theory in curved space},
Talk presented at 10-th Conf. Gen. Rel. and Rel. Astrophysics,
Guelph,Canada, 28-31 May 2003, e-Print Archive: gr-qc/0306104.


\bibitem {K2} A.~Kempf, {\em A covariant information density cutoff in
curved space-time}, Phys. Rev. Lett. 92:221301, 2004.


\bibitem{Lan67}
H.~Landau, {\em Necessary density conditions for sampling and
interpolation of certain entire functions}, Acta. Math.,
\textbf{117}, (1967), 37-52.



\bibitem{Pes90}
 I.~Pesenson, {\em  The Bernstein Inequality in the Space of Representation
 of Lie group}, Dokl. Acad. Nauk USSR {\bf 313} (1990), 86--90;
 English transl. in Soviet Math. Dokl. {\bf 42} (1991).


\bibitem{Pes1}
 I.~Pesenson, {\em  A sampling theorem on homogeneous manifolds},
 Transactions of AMS, Vol. {\bf 352}(9),(2000), 4257-4269.



\bibitem{Pes4}
I.~Pesenson, {\em Poincare-type inequalities and reconstruction of
Paley-Wiener functions on manifolds}, J. of Geom. Anal., {\bf
4}(1), (2004), 101-121.



\bibitem{Pes8}
I.~Pesenson, {\em Sampling of Band limited vectors}, J. of Fourier
Analysis and Applications {\bf 7}(1), (2001), 93-100.



\bibitem{P10} I.~Pesenson,
{\em Frames for Paley-Wiener spaces on Riemannian manifolds}, will
appear in Integral Geometry and Tomography,(A. Markoe,  E.T.
Quinto, editors), Contemp. Math., AMS, (2006).



 \bibitem{Pes5} I. ~Pesenson, {\em Lagrangian splines, Spectral Entire
 Functions and Shannon-Whittaker Theorem on Manifolds}, Temple University
Research Report 95-87,
 (1995), 1-28.

\bibitem {SW}
C.~Shannon, W.~Weaver, {\em The Mathematical Theory of
Communication}, Univ. of Illinois Press, 1963.


\bibitem{Schoen}
I.~Schoenberg, {\em Cardinal Spline Interpolation},  CBMS, 12
SIAM, Philadelphia, 1973.

\bibitem{S}
R.~Strichartz, {\em Analysis of the Laplacian on the complete
Riemannian manifold}, J. of Funct. Anal.,{\bf 52},(1983), 48-79.
\end{thebibliography}
 \end{document}